\newtheorem{thm}{Theorem}[section]
\newtheorem{lem}[thm]{Lemma}
\newtheorem{prop}[thm]{Proposition}
\newtheorem{conj}{Conjecture}[section]
\newtheorem{defn}{Definition}[section]
\theoremstyle{definition}
\theoremstyle{remark}
\newtheorem*{rem}{Remark}
\newcommand{\real}{\mathbb{R}}
\newcommand{\cplx}{\mathbb{C}}
\newcommand{\nats}{\mathbb{N}}
\newcommand{\al}{\alpha}
\newcommand{\eps}{\epsilon}
\renewcommand{\phi}{\varphi}
\newcommand{\gam}{\gamma}
\newcommand{\del}{\delta}
\newcommand{\sg}{\sigma}
\newcommand{\kap}{\kappa}
\newcommand{\Lam}{\Lambda}
\newcommand{\Del}{\Delta}
\newcommand{\Gam}{\Gamma}
\renewcommand{\bar}{\overline}
\DeclareFontFamily{OT1}{rsfs}{}
\DeclareFontShape{OT1}{rsfs}{n}{it}{<-> rsfs10}{}
\DeclareMathAlphabet{\mathscr}{OT1}{rsfs}{n}{it}
\begin{document}

\begin{center}
\LARGE
Bounding sums of the M\"obius function over arithmetic progressions

\vspace{0.2in}
\normalsize
Lynnelle Ye

\end{center}

\begin{abstract}
Let $M(x)=\sum_{1\le n\le x}\mu(n)$ where $\mu$ is the M\"obius function. It is well-known that the Riemann Hypothesis is equivalent to the assertion that $M(x)=O(x^{1/2+\epsilon})$ for all $\epsilon>0$. There has been much interest and progress in further bounding $M(x)$ under the assumption of the Riemann Hypothesis. In 2009, Soundararajan established the current best bound of
\[
M(x)\ll\sqrt{x}\exp\left((\log x)^{1/2}(\log\log x)^c\right)
\]
(setting $c$ to $14$, though this can be reduced). Halupczok and Suger recently applied Soundararajan's method to bound more general sums of the M\"obius function over arithmetic progressions, of the form 
\[
M(x;q,a)=\sum_{\substack{n\le x \\ n\equiv a\pmod{q}}}\mu(n).
\]
They were able to show that assuming the Generalized Riemann Hypothesis, $M(x;q,a)$ satisfies
\[
M(x;q,a)\ll_{\epsilon}\sqrt{x}\exp\left((\log x)^{3/5}(\log\log x)^{16/5+\epsilon}\right)
\]
for all $q\le\exp\left(\frac{\log 2}2\lfloor(\log x)^{3/5}(\log\log x)^{11/5}\rfloor\right)$, with $a$ such that $(a,q)=1$, and $\epsilon>0$.

In this paper, we improve Halupczok and Suger's work to obtain the same bound for $M(x;q,a)$ as Soundararajan's bound for $M(x)$ (with a $1/2$ in the exponent of $\log x$), with no size or divisibility restriction on the modulus $q$ and residue $a$.
\end{abstract}

\section{Introduction}

The Riemann Hypothesis (RH) is a conjecture about the zeros of the Riemann zeta function, a meromorphic function $\zeta:\cplx\to\cplx$ defined as follows: for $s\in\cplx$ with $\Re(s)>1$, we let 
\[
\zeta(s)=\sum_{n=1}^{\infty}\frac1{n^s}.
\] 
For $\Re(s)\le1$, $\zeta(s)$ is the value of the unique analytic continuation of this function to the rest of the complex plane. The function $\zeta$ is holomorphic everywhere except at $s=1$, where it has a simple pole. For $\Re(s)>1$, we can also write $\zeta(s)$ in its Euler product representation
\[
\zeta(s)=\prod_{p\text{ prime}}\left(1-\frac1{p^s}\right)^{-1}.
\]

The Riemann zeta function satisfies an important functional equation relating it to the $\Gam$ function, which is defined by $\Gam(s)=\int_0^{\infty}x^{s-1}e^{-x}dx$ for $s\in\cplx$ such that $\Re(s)>0$ and analytically continued to the rest of the complex plane. The functional equation is
\[
\pi^{-s/2}\Gam\left(\frac s2\right)\zeta(s)=\pi^{-(1-s)/2}\Gam\left(\frac{1-s}2\right)\zeta(1-s).
\]

The $\Gam$ function is nonzero everywhere, and it has simple poles exactly at the nonpositive integers. From this and the above equation, we can see that $\zeta(s)=0$ when $s$ is a negative even integer. From the Euler product of $\zeta$, we may compute for $\Re(s)>1$ that
\[
\frac1{\zeta(s)}=\sum_{n=1}^{\infty}\frac{\mu(n)}{n^s}
\]
where $\mu(n)$ is the multiplicative function, called the M\"obius function, defined by 
\[
 \mu(n) =
  \begin{cases}
   0 & \text{if } p^2|n \text{ for some prime } p, \\
   1       & \text{if } n = p_1\dotsb p_{2k} \text{ for distinct primes } p_i, \\
   -1 & \text{if } n=p_1\dotsb p_{2k-1} \text{ for distinct primes } p_i.
  \end{cases}
\]
Since $\sum_{n=1}^{\infty}\frac{\mu(n)}{n^s}$ is absolutely convergent for $\Re(s)>1$, we know $\zeta(s)\neq0$ for $\Re(s)>1$, and hence from the functional equation that $\zeta(s)\neq0$ for $\Re(s)<0$, except for the previously mentioned negative even integers. So all other zeros of $\zeta$ lie in the strip $0\le\Re(s)\le1$ (referred to as the critical strip) and are known as the ``nontrivial zeros''. 

The behavior of the zeros of the Riemann zeta function is deeply connected to the behavior of the primes in $\nats$. Let $\pi(x)$ be the number of primes no greater than $x$; then the Prime Number Theorem, which states that $\lim_{x\to\infty}\frac{\pi(x)}{x/\log x}=1$, follows from the fact that $\zeta$ has no zeros on the line $\Re(s)=1$. More specific information about the distribution of primes follows from more specific information about the zeros of $\zeta$, such as would be given by the Riemann Hypothesis.

\begin{conj}[Riemann Hypothesis]
All nontrivial zeros of $\zeta(s)$ lie on the line $\Re(s)=\frac12$, called the critical line.
\end{conj}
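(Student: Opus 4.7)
The plan is to approach what is arguably the most famous open problem in mathematics, so I should be honest that this proposal is aspirational rather than executable. The natural starting point is the completed zeta function $\xi(s)=\pi^{-s/2}\Gam(s/2)\zeta(s)$, which by the functional equation stated above satisfies $\xi(s)=\xi(1-s)$ and is entire of order one. This symmetry already forces the nontrivial zeros of $\zeta$ to occur in pairs $\rho,1-\bar\rho$ symmetric about the critical line, so the task reduces to showing that each such pair collapses to a single zero on $\Re(s)=1/2$. My strategy would be to combine a Hadamard product representation of $\xi$ with a positivity or rigidity argument strong enough to rule out off-line zeros.

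The most promising framework I would pursue first is the Hilbert--P\'olya approach: construct a self-adjoint operator $H$ on some Hilbert space whose spectrum consists exactly of the numbers $t_n$ with $\tfrac12+it_n$ a nontrivial zero. Because self-adjoint operators on Hilbert spaces have real spectra, exhibiting such an $H$ together with a trace formula matching the explicit formula for $\zeta$ would finish the proof. Candidates worth taking seriously include Connes' action on the ad\`ele class space and Berry--Keating style quantizations of the classical Hamiltonian $xp$; the step I would try to push through is verifying essential self-adjointness under a natural boundary condition and then matching the Selberg-type trace to the Guinand--Weil explicit formula term by term.

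As a parallel analytic track, I would try to shrink the classical de la Vall\'ee Poussin zero-free region all the way to the line $\Re(s)=1/2$. This would start from the Mertens-style bound on $M(x)$ implied by Soundararajan's estimate quoted in the abstract, feed it into a Tur\'an power-sum inequality for $-\zeta'/\zeta$, and iterate with careful control of $\arg\zeta(\tfrac12+it)$ via the Backlund--Littlewood method. The hope is that an unconditional $M(x)\ll x^{1/2+o(1)}$ estimate, proved by this kind of explicit-formula bootstrap, would yield RH by the equivalence recalled in the abstract.

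The hard part, of course, is that every known avenue stalls at precisely the point where one needs a \emph{reason} for zeros to lie on the critical line rather than merely an upper bound on how far off they can stray. Analytic methods produce only logarithmic zero-free regions because the required Tur\'an-type inequalities lose a factor that exactly cancels the gain one needs; Hilbert--P\'olya candidates have resisted proofs of self-adjointness with the correct spectrum; and random matrix heuristics, while extraordinarily accurate numerically, remain heuristic. I expect any eventual proof will require realizing $\zeta$ as arising from a cohomological or spectral object carrying an intrinsic positivity (analogous to the role of Frobenius eigenvalues in Deligne's proof of RH over finite fields), and constructing such an object in characteristic zero is itself an open problem of comparable depth.
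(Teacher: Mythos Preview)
The statement you have been asked to prove is labelled in the paper as a \emph{Conjecture}, not a theorem: it is the Riemann Hypothesis itself, and the paper does not supply a proof of it. The paper merely states RH in the introduction for context, and then \emph{assumes} GRH in order to prove Theorem~\ref{main}, the bound on $M(x;q,a)$. So there is no ``paper's own proof'' to compare your attempt against.

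You clearly recognize this, since your proposal is explicitly a survey of possible attack lines (Hilbert--P\'olya, zero-free region bootstrap, cohomological positivity) rather than an argument. As such it contains no error of reasoning, but it also contains no proof: each of the approaches you sketch is well known and, as you yourself say in the final paragraph, stalls at a point that no one currently knows how to get past. In particular, the ``parallel analytic track'' you describe is circular as stated: you propose to use Soundararajan's bound on $M(x)$ to obtain an unconditional $M(x)\ll x^{1/2+o(1)}$, but Soundararajan's bound already assumes RH, so it cannot be fed back in to deduce RH. The honest conclusion is simply that this statement is an open conjecture and admits no proof at present.
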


From the fact that $\zeta$ has no zeros on the line $\Re(s)=1$, the more specific version of the Prime Number Theorem as proven by Hadamard and de la Vall\'ee Poussin is that 
\[
\pi(x)=\int_2^x\frac1{\log t}dt+O(x\exp(-a\sqrt{\log x}))
\]
for some $a>0$. Assuming the Riemann Hypothesis, this would be improved to
\[
\pi(x)=\int_2^x\frac1{\log t}dt+O(\sqrt{x}\log x).
\]

The Mertens function $M(x)$ is defined by $M(x)=\sum_{1\le n\le x}\mu(n)$ where $\mu$ is the M\"obius function. It is well-known that the Riemann Hypothesis is equivalent to the assertion that $M(x)=O(x^{1/2+\eps})$ for all $\eps>0$ (see~\cite{iwakow}, for example, for details). There has been much interest and progress in further bounding $M(x)$ under the assumption of the Riemann Hypothesis. Landau (\cite{landau}) showed in 1924 that RH implies $M(x)=O(x^{1/2+\eps})$ with $\eps\ll \log\log\log x/\log\log x$, and Titchmarsh (\cite{titchmarsh}) in 1927 improved this to $\eps\ll 1/\log\log x$. In 2009, Maier and Montgomery (\cite{maimont}) gave a much improved bound of
\[
M(x)\ll\sqrt{x}\exp\left(C(\log x)^{39/61}\right).
\]
Finally, in~\cite{sound}, Soundararajan established the current best bound of
\[
M(x)\ll\sqrt{x}\exp\left((\log x)^{1/2}(\log\log x)^c\right)
\]
(where Soundararajan set $c$ to $14$, though Balazard and de Roton decrease it to $5/2+\eps$ in~\cite{baladero}). Gonek has conjectured (see Ng in~\cite{ng} for details) that the true range of $|M(x)|$ goes up precisely to the much smaller $O\left(\sqrt{x}(\log\log\log x)^{5/4}\right)$.

Halupczok and Suger recently applied Soundararajan's method to bound more general sums of the M\"obius function over arithmetic progressions, of the form 
\[
M(x;q,a)=\sum_{\substack{n\le x \\ n\equiv a\pmod{q}}}\mu(n).
\]
This requires a generalized version of the Riemann Hypothesis, appropriately called the Generalized Riemann Hypothesis (GRH). The GRH deals with Dirichlet $L$-functions, defined by
\[
L(s,\chi)=\sum_{n=1}^{\infty}\frac{\chi(n)}{n^s}
\]
and analytically continued to the complex plane, where $\chi$ is a certain multiplicative function called a Dirichlet character. There are $\phi(q)$ distinct characters $\chi$ for each modulus $q$, each with range contained in the $q$th roots of unity, satisfying
\[
\frac1{\phi(q)}\sum_{\chi}\chi(n)\bar{\chi}(a)=
\begin{cases}
1 & \text{ if } n\equiv a\pmod{q} \\
0 & \text{ otherwise.}
\end{cases}
\]
Like $\zeta(s)$, $L(s,\chi)$ has a functional equation relating it to $\Gam$, which looks like
\[
\left(\frac q{\pi}\right)^{s/2}\Gam\left(\frac{\kap+s}2\right)L(s,\chi)
=\eps(\chi)\left(\frac q{\pi}\right)^{(1-s)/2}\Gam\left(\frac{\kap+1-s}2\right)L(1-s,\bar{\chi})
\]
where $\kap$ is equal to $0$ if $\chi(-1)=1$ and $1$ if $\chi(-1)=-1$. Consequently its zeros also have a similar structure.

\begin{conj}[Generalized Riemann Hypothesis]
For every $\chi$, all nontrivial zeros of $L(s,\chi)$ lie on the line $\Re(s)=\frac12$.
\end{conj}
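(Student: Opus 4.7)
The statement to be proven is the Generalized Riemann Hypothesis itself, which is one of the most celebrated open problems in mathematics; no complete proof is known, and any plan I can sketch is therefore a direction of attack rather than a credible route to completion. With that caveat, the most structurally appealing strategy is the Hilbert--P\'olya program: for each Dirichlet character $\chi$, construct a self-adjoint operator $H_\chi$ on a Hilbert space whose spectrum is exactly the set of real numbers $t$ with $L(\tfrac12+it,\chi)=0$. Self-adjointness would force $t\in\real$ and hence all nontrivial zeros of $L(s,\chi)$ onto the line $\Re(s)=\tfrac12$. The plan would be to build $H_\chi$ as a quantization of a classical Hamiltonian of Berry--Keating type (roughly $xp$) on a function space where the character $\chi$ enters via boundary data or a modular symbol, and then verify self-adjointness together with a trace formula identifying the eigenvalues with the zero ordinates.

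An alternative, less geometric approach is through positivity criteria. Weil's explicit formula identifies GRH with the positive semi-definiteness of a distribution built out of sums over primes and integrals against the Gamma factor in the functional equation displayed in the excerpt. In this route I would first reduce GRH to proving the Weil distribution is positive on a dense family of test functions, then attempt to realize that positivity as the inner product of a vector with itself in a reproducing-kernel Hilbert space of de Branges type. Li's criterion provides an equivalent packaging in terms of the nonnegativity of the Taylor coefficients $\lam_n(\chi)$ of the logarithmic derivative of the completed $L$-function, which at least allows numerical verification character by character.

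A third, more incremental plan would be to push the classical zero-free region. Starting from the standard region obtained from $-\Re(L'/L)(s,\chi)\geq 0$ style inequalities, one would attempt to enlarge it monotonically until it fills the strip $\tfrac12<\Re(s)\leq 1$. This has the advantage of being concrete but it has historically yielded only logarithmic gains; no known positive-sign argument scales all the way to the critical line.

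The main obstacle is that none of these programs has ever been carried out for even the original Riemann zeta function, and the extension to an entire family of $L$-functions introduces the additional headache of uniformity in $\chi$, which would have to be established in any trace formula or positivity identity uniformly in the modulus $q$. Consequently, the rest of this paper does not attempt to prove GRH, but rather assumes it and extracts arithmetic information, in the spirit of Soundararajan's work on $M(x)$, about the sums $M(x;q,a)$.
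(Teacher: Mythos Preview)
Your assessment is correct and matches the paper's treatment exactly: the statement is labeled a \emph{Conjecture}, not a theorem, and the paper offers no proof whatsoever---it simply states GRH and then assumes it for the remainder of the argument (as announced explicitly at the end of the introduction). Your closing sentence captures precisely what the paper does, and the speculative programs you sketch are appropriate context but, as you note, none constitutes an actual proof.
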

As with the consequences of RH for the distribution of primes, GRH would imply, among other things, similar consequences for the distribution of primes in arithmetic progressions.

Halupczok and Suger showed that assuming GRH, given $\eps>0$, the bound
\[
M(x;q,a)\ll_{\eps}\sqrt{x}\exp\left((\log x)^{3/5}(\log\log x)^{16/5+\eps}\right)
\]
holds for all progressions $a\pmod{q}$ with $q\le\exp\left(\frac{\log 2}2\lfloor(\log x)^{3/5}(\log\log x)^{11/5}\rfloor\right)$ and $(a,q)=1$.

In this paper, we improve Halupczok and Suger's work to obtain a bound of strength equal to that of Soundararajan, with no restriction on the modulus $q$ or the residue $a$. Our main theorem is as follows.
\begin{thm}
\label{main}
Assuming GRH, given $\eps>0$, the bound
\[
M(x;q,a)\ll_{\eps}\sqrt{x/d}\exp\left((\log(x/d))^{1/2}(\log\log(x/d))^{3+\eps}\right)
\]
holds uniformly for all progressions $a\pmod{q}$, where $d=\gcd(a,q)$.
\end{thm}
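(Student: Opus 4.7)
My plan is to mimic Soundararajan's argument~\cite{sound} with Dirichlet $L$-functions in place of $\zeta$, obtaining uniformity in the modulus by combining an analytic bound for small $q$ with a trivial bound for large $q$.

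The first step is to reduce to the coprime case. Setting $d=\gcd(a,q)$, $q'=q/d$, $a'=a/d$, any $n\le x$ with $n\equiv a\pmod q$ is divisible by $d$, so writing $n=dm$ and using $\mu(dm)=\mu(d)\mu(m)\mathbf{1}_{\gcd(d,m)=1}$ (and $\mu(d)=0$ unless $d$ is squarefree) gives
\[
M(x;q,a)=\mu(d)\sum_{\substack{m\le x/d\\ m\equiv a'\pmod{q'}\\ \gcd(m,d)=1}}\mu(m).
\]
Resolving the coprimality condition $\gcd(m,d)=1$ by M\"obius inversion expresses the right-hand side as a bounded signed combination of ordinary Mertens sums $M(y;Q,A)$ with $\gcd(A,Q)=1$, $y\le x/d$, and $Q\mid q$. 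For each such coprime sum, orthogonality of Dirichlet characters gives
\[
M(y;Q,A)=\frac{1}{\phi(Q)}\sum_{\chi\pmod Q}\bar\chi(A)\,M(y,\chi),\qquad M(y,\chi):=\sum_{n\le y}\mu(n)\chi(n),
\]
so it suffices to bound the character-twisted Mertens sums $M(y,\chi)$ uniformly in $\chi$.

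The technical heart is to bound $M(y,\chi)$ under GRH by repeating Soundararajan's argument with $L(s,\chi)$ replacing $\zeta(s)$. Since GRH implies $L(s,\chi)\ne0$ for $\Re s>\tfrac12$, a smoothed Perron integral representation
\[
\sum_n\mu(n)\chi(n)w(n/y)=\frac{1}{2\pi i}\int_{(c)}\frac{\tilde w(s)\,y^s}{L(s,\chi)}\,ds
\]
(for a suitable smoothing $w$) admits a contour shift to just right of the critical line without crossing poles. The resulting integral is estimated by Soundararajan's pointwise upper bound for $|L(\tfrac12+\alpha+it,\chi)|^{-1}$ in terms of a short Dirichlet polynomial plus a contribution from zeros of $L(\cdot,\chi)$ near $\tfrac12+it$, a mean-square estimate for that polynomial, and the standard Riemann--von Mangoldt-type formula $N(T,\chi)\sim\tfrac{T}{2\pi}\log\tfrac{QT}{2\pi}$ for the zeros of $L(s,\chi)$. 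In each of these ingredients the conductor $Q$ enters only through replacing the scale $T$ by $QT$; tracking this substitution through Soundararajan's parameter optimization yields
\[
|M(y,\chi)|\ll_{\eps}\sqrt{y}\exp\bigl((\log(Qy))^{1/2}(\log\log(Qy))^{3+\eps}\bigr).
\]

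Finally, set $y=x/d$ and note $Q\mid q$. When $Q\le\sqrt y$ one has $\log(Qy)\le\tfrac32\log y$, and the resulting multiplicative constant in the exponent is absorbed into the slack $(\log\log y)^{\eps}$, yielding the claimed bound. When $Q>\sqrt y$ (equivalently $q>\sqrt{xd}$), the trivial estimate $|M(x;q,a)|\le x/q+1$ is itself $\ll\sqrt{x/d}$, stronger than the claim. The main obstacle I anticipate is the $L$-function step: one must re-examine each mean-value and zero-counting estimate in~\cite{sound} with $L(s,\chi)$ in place of $\zeta(s)$, and verify that after re-optimizing parameters the conductor enters only logarithmically, so that the $(\log\log)^{3+\eps}$ exponent persists uniformly in $Q$---rather than degrading to the $3/5$ exponent of Halupczok--Suger, which arose from a character-averaging argument that required $q$ to be small.
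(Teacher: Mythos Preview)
Your proposal contains a genuine gap at the step where you assert that ``the conductor $Q$ enters only through replacing the scale $T$ by $QT$'' in each ingredient. This is true for the zero-counting input (the Riemann--von Mangoldt formula and the explicit formula for $N(t+h,\chi)-N(t-h,\chi)$), but it is \emph{false} for the mean-square/moment estimate. For a single character, the $2k$-th moment bound for a Dirichlet polynomial $\sum_{p\le N}a(p)\chi(p)p^{-it}$ over $t\in[T,2T]$ requires $N^k\le T$, with no gain from the conductor; the twist by $\chi$ is invisible to the mean-value theorem. But in the explicit-formula step one is forced to take the polynomial length $N\approx e^{2\pi\Delta}=(QT)^{(1+\eta)/V}$ in order to make the term $\frac{\log(QT)}{2\pi\Delta}$ comparable to $V$. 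With the single-character constraint $N^k\le T$ one can then only take $k\le \frac{V\log T}{(1+\eta)\log(QT)}$, and the resulting bound on the number of $V$-atypical ordinates acquires a factor $\log T/\log(QT)$ in the exponent. Tracing this through the final optimization does not give $(\log(Qy))^{1/2}$ uniformly; for $Q$ anywhere near $\exp((\log y)^{1/2})$ the bound degenerates badly, and your splitting at $Q=\sqrt{y}$ does not rescue it.

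The paper's approach is essentially the opposite of what you propose. Rather than bounding each $M(y,\chi)$ individually, it keeps the full average $\frac{1}{\phi(r)}\sum_{\chi\pmod r}$ together and exploits it via a large-sieve moment bound (Proposition~\ref{sieve}): summing over characters replaces the constraint $N^k\le T$ by $N^k\le qT$. This is exactly what permits the choice $y=(qT)^{1/V}$ in the definition of $V$-typical, eliminating the mismatch between $\log T$ and $\log(qT)$ and yielding the exponent $1/2$ uniformly in $q$. Halupczok--Suger's weaker exponent $3/5$ came not from character averaging (which they also used) but from the choice $y=T^{1/V}$; the averaging is the cure, not the disease. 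A secondary point: your M\"obius-inversion reduction of the condition $\gcd(m,d)=1$ produces $\tau(d)$ summands with varying moduli, not a bounded number; the paper instead absorbs this condition into a finite Euler factor $l_d(s,\chi)=\prod_{p\mid d}(1-\chi(p)p^{-s})$ in the Perron integrand, which contributes only $O(\sqrt{\log d}/\log\log d)$ to the exponent.
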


Since $|\mu(n)|\le1$, the trivial bound on $M(x;q,a)$ is $x/q$. Our bound therefore remains nontrivial whenever $q\le x^{1/2-\eps}$.

In Section~\ref{prelim}, we provide preliminaries on the explicit formula for sums over zeros of $L$-functions, consequent bounds on the deviation from average of the number of zeros in an interval of the critical line, and the large sieve. In Section~\ref{pointcount}, we bound the number of well-separated ordinates on the critical line with an unusual accumulation of zeros of $L(s,\chi)$ nearby. In 
Section~\ref{Lfuncbds}, we give bounds for $L(s,\chi)$ depending on the number of zeros near $s$. In Section~\ref{finalthm}, we prove Theorem~\ref{main}.

We assume GRH for the rest of this paper.

\section*{Acknowledgements}

I would like to thank Kannan Soundararajan for advising me on this project, and the Stanford Undergraduate Research Institute in Mathematics for its support.

\section{Preliminaries}
\label{prelim}

First consider the case $(a,q)=1$. Using the Dirichlet characters $\chi\pmod{q}$ and Perron's formula, we may write
\begin{align*}
M(x;q,a) &=\frac1{\phi(q)}\sum_{\chi\pmod{q}}\bar{\chi}(a)\sum_{n\le x}\chi(n)\mu(n) \\
&=\frac1{\phi(q)}\sum_{\chi\pmod{q}}\frac{\bar{\chi}(a)}{2\pi i}\int_{1+1/\log x-i\lfloor x\rfloor}^{1+1/\log x+i\lfloor x\rfloor}\frac{x^s}{sL(s,\chi)}ds+O(\log x).
\end{align*}
More generally, let $a=bd,q=rd$ with $(b,r)=1$. We have
\[
M(x;q,a) = \sum_{\substack{n\le x\\ n\equiv bd\pmod{rd}}}\mu(n) = \sum_{\substack{m\le x/d\\ m\equiv b\pmod{r}}} \mu(dm)=\mu(d)\sum_{\substack{m\le x/d\\ m\equiv b\pmod{r}\\ (d,m)=1}} \mu(m)
\]
so the sum we are interested in is really just
\[
\sum_{\substack{m\le x/d\\ m\equiv b\pmod{r}\\ (d,m)=1}} \mu(m)=\frac1{\phi(r)}\sum_{\chi\pmod{r}}\bar{\chi}(b)\sum_{\substack{m\le x/d\\ (d,m)=1}}\chi(m)\mu(m)
\]
\[
=\frac1{\phi(r)}\sum_{\chi\pmod{r}}\frac{\bar{\chi}(b)}{2\pi i}\int_{1+1/\log(x/d)-i\lfloor x/d\rfloor}^{1+1/\log(x/d)+i\lfloor x/d\rfloor}\frac{(x/d)^s}{sL(s,\chi)l_d(s,\chi)}ds+O(\log(x/d))
\]
where
\[
l_d(s,\chi)=\prod_{p|d}\left(1-\frac{\chi(p)}{p^s}\right).
\]
We are now concerned with bounding the average value of the integrand $\frac{(x/d)^s}{sL(s,\chi)l_d(s,\chi)}$ over all $\chi$, as this suffices to bound $M(x;q,a)$. We first require the following lemmas. The first constructs smooth approximations to the characteristic function of an interval.

\begin{lem}
\label{charfnapprox}
Let $\chi_{[-h,h]}$ be the characteristic function of the interval $[-h,h]$. There are even entire functions $F_+$ and $F_-$ depending on $h$ and $\Delta$ which satisfy the following properties:

1. $F_-(u)\le\chi_{[-h,h]}(u)\le F_+(u)$ for $u\in\real$.

2. $\int_{-\infty}^{\infty}|F_{\pm}(u)-\chi_{[-h,h]}(u)|du\le\frac1{\Del}$.

3. $\hat{F}_{\pm}(x)=0$ for $|x|\ge\Del$, and $\hat{F}_{\pm}(x)=\frac{\sin(2\pi hx)}{\pi x}+O(1/\Del)$.

4. If $z=x+iy$ with $|z|\ge2h$ then $|F_{\pm}(z)|\ll\frac{e^{2\pi\Del|y|}}{(\Del|z|)^2}$.
\end{lem}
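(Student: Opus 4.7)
The plan is to apply the Beurling--Selberg construction of extremal majorants and minorants for characteristic functions. Let $B(z)$ be Beurling's entire function of exponential type $2\pi$, characterized as the majorant of $\operatorname{sgn}(x)$ on $\mathbb{R}$ with minimal $L^1$ excess $\int_{-\infty}^{\infty}(B(x)-\operatorname{sgn}(x))\,dx=1$, and let $b(z)=-B(-z)$ be the corresponding minorant. The standard explicit expression of $B$ as $(\sin\pi z/\pi)^2$ times a partial-fraction-type kernel yields all the structural facts needed: $\operatorname{supp}(\hat{B})\subseteq[-1,1]$, the real-line decay $|B(x)-\operatorname{sgn}(x)|\ll 1/x^2$ for $|x|\ge 1$, and the global exponential-type bound $|B(w)|\ll e^{2\pi|\Im w|}$. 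I would then set
\[
F_+(z)=\tfrac12\bigl[B(\Delta(z+h))+B(\Delta(h-z))\bigr],\qquad F_-(z)=\tfrac12\bigl[b(\Delta(z+h))+b(\Delta(h-z))\bigr],
\]
both of which are even entire functions of exponential type $2\pi\Delta$ (evenness is manifest from $z\mapsto-z$ swapping the two arguments).

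Property~1 follows from the a.e.\ identity $\tfrac12(\operatorname{sgn}(u+h)+\operatorname{sgn}(h-u))=\chi_{[-h,h]}(u)$ together with $b\le\operatorname{sgn}\le B$ applied to each rescaled argument (checking the cases $|u|<h$ and $|u|>h$ separately to see that the $\pm 1$ contributions from the two $\operatorname{sgn}$ terms combine correctly). Property~2 is a change of variable: each summand contributes $\int(B(\Delta(u\pm h))-\operatorname{sgn}(u\pm h))\,du=1/\Delta$, and since the integrand $F_\pm-\chi_{[-h,h]}$ has constant sign by Property~1, halving the sum gives $\int|F_\pm-\chi_{[-h,h]}|\,du=1/\Delta$. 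For Property~3, note that $F_\pm\in L^1(\mathbb{R})$ (since $\chi_{[-h,h]}\in L^1$ and $F_\pm-\chi_{[-h,h]}\in L^1$ by Property~2); the Paley--Wiener theorem then forces $\operatorname{supp}(\hat{F}_\pm)\subseteq[-\Delta,\Delta]$, while splitting $\hat{F}_\pm=\widehat{\chi_{[-h,h]}}+\widehat{(F_\pm-\chi_{[-h,h]})}$ and applying $\|\hat{g}\|_\infty\le\|g\|_1$ to the difference gives $\hat{F}_\pm(x)=\sin(2\pi hx)/(\pi x)+O(1/\Delta)$.

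The main obstacle is Property~4. On the real axis it is immediate from $|B(x)-\operatorname{sgn}(x)|\ll 1/x^2$: for $|u|\ge 2h$ one gets $|F_\pm(u)|\ll 1/(\Delta(|u|-h))^2\ll 1/(\Delta|u|)^2$ after cancelling the $\operatorname{sgn}$ contributions. Lifting this to complex $z$ requires using the explicit formula for $B$ directly. The prefactor $(\sin\pi w/\pi)^2$ supplies exactly the $e^{2\pi|\Im w|}$ growth, while the kernel in parentheses can be compared with the partial-fraction expansion $\pi^2/\sin^2(\pi w)=\sum_{n\in\mathbb{Z}}1/(w-n)^2$ to isolate a remainder of order $1/|w|^2$, uniformly in bounded horizontal strips (the telescoping of the Beurling kernel against $\operatorname{sgn}(\Re w)\cdot\pi^2/\sin^2(\pi w)$ is standard but delicate bookkeeping). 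The net conclusion is $|B(w)-\operatorname{sgn}(\Re w)|\ll e^{2\pi|\Im w|}/|w|^2$ for $|w|$ bounded away from $0$. Substituting $w=\Delta(z\pm h)$ and using $|z\pm h|\ge|z|/2$ when $|z|\ge 2h$ then delivers $|F_\pm(z)|\ll e^{2\pi\Delta|y|}/(\Delta|z|)^2$, completing the proof.
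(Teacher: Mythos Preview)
Your construction is exactly the Selberg--Beurling construction the paper invokes: in the paper's notation $B=H+K$ and $b=-B(-\cdot)=H-K$, so your $F_\pm$ coincide verbatim with the paper's formulas. The paper's own ``proof'' is simply a citation to Selberg and Vaaler together with the explicit formulas, so your sketch is correct and in fact supplies more detail (particularly for Properties 2--4) than the paper does.
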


\begin{proof}
Such functions were originally constructed by Selberg (\cite{selberg}) using Beurling's approximation to the sign function. They are discussed in detail in~\cite{vaaler}. We will just state the relevant formulas. Let
\[
K(z)=\frac{(\sin \pi z)^2}{(\pi z)^2},\hspace{0.2in} H(z)=\left(\frac{\sin\pi z}{\pi}\right)^2\left(\sum_{n=-\infty}^{\infty}\frac{\text{sgn}(n)}{(z-n)^2}+\frac2z\right),
\]
where $\text{sgn}(x)$ is the sign function, with values $1$ for $x>0$, $-1$ for $x<0$, and $0$ for $x=0$. Then the functions $F_{\pm}$ are
\[
F_{\pm}(z)=\frac12(H(\Del(z+h))\pm K(\Del(z+h))+H(\Del(h-z))\pm K(\Del(h-z))).
\]
\end{proof}

Next we need the following form of the explicit formula for sums over zeros of $L$-functions.

\begin{lem}
\label{explicitformula}
Let $\chi$ be a primitive character $\pmod{q}$. Let $f(s)$ be analytic in the strip $|\Im(s)|\le1/2+\eps$ for some $\eps>0$, real-valued on the real line, and such that $|f(s)|\ll(1+|s|)^{-1-\del}$ for some $\del>0$. Then if $\rho=1/2+i\gam$ runs over the nontrivial zeroes of $L(s,\chi)$, we have
\begin{align*}
\sum_{\rho}f(\gam)=&\frac1{2\pi}\hat{f}(0)\log\frac{q}{\pi}+(1-\kap)f\left(\frac1{2i}\right)
+\frac1{2\pi}\int_{-\infty}^{\infty}f(u)\Re\frac{\Gam'}{\Gam}\left(\frac{1/2+\kappa+iu}2\right)du \\
&-\frac1{2\pi}\sum_{n=2}^{\infty}\frac{\Lam(n)\chi(n)}{\sqrt{n}}\left(\hat{f}\left(\frac{\log n}{2\pi}\right)+\hat{f}\left(-\frac{\log n}{2\pi}\right)\right)
\end{align*}
where $\kap$ is equal to $0$ if $\chi(-1)=1$ and $1$ if $\chi(-1)=-1$.
\end{lem}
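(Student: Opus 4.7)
I would prove this by the classical contour-integration derivation of the Weil-type explicit formula. Set $g(s) = f\!\left(\frac{s-1/2}{i}\right)$; by hypothesis $g$ is analytic and rapidly decaying on the strip $-\eps \le \Re s \le 1 + \eps$, and $g(1/2 + i\gam) = f(\gam)$. The identity $\frac{1}{2\pi i}\oint g(s) \frac{L'}{L}(s,\chi)\, ds = \sum_{\rho} f(\gam)$ (plus a few explicit residues discussed below) around a rectangle enclosing the critical strip is the source of the formula.

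Concretely, I would fix $c \in (1, 1+\eps)$, let $\Gam_T$ be the rectangle with vertices $c \pm iT$, $1-c \pm iT$, and let $T \to \infty$ along a sequence avoiding zero ordinates of $L(s,\chi)$ so that the horizontal segments vanish (using the standard bound $\frac{L'}{L}(s,\chi) \ll \log(q(|t|+2))$ on such lines). The residue theorem then produces on one side the sum $\sum_\rho f(\gam)$ together with a contribution at the trivial zero $s = 0$ that is present precisely when $\kap = 0$; this is the source of the $(1-\kap)f(1/(2i))$ term in the stated formula.

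Next I would evaluate the two vertical integrals. On $\Re s = c$, substituting the Dirichlet series $\frac{L'}{L}(s,\chi) = -\sum_n \Lam(n)\chi(n) n^{-s}$ and interchanging sum and integral, each term $\frac{1}{2\pi i}\int_c g(s) n^{-s}\, ds$ reduces (after shifting to the real axis) to $\frac{1}{2\pi\sqrt{n}}\hat f(\log n/(2\pi))$ by Fourier inversion, yielding one half of the prime sum. On $\Re s = 1-c$, I would invoke the logarithmic derivative of the functional equation
\[
\frac{L'}{L}(s,\chi) = -\log\frac{q}{\pi} - \tfrac{1}{2}\frac{\Gam'}{\Gam}\!\left(\tfrac{s+\kap}{2}\right) - \tfrac{1}{2}\frac{\Gam'}{\Gam}\!\left(\tfrac{1-s+\kap}{2}\right) - \frac{L'}{L}(1-s,\bar\chi),
\]
splitting the integrand into four pieces. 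The constant piece contributes the $\frac{\hat f(0)}{2\pi}\log(q/\pi)$ term; the two digamma integrals, shifted to the critical line $\Re s = 1/2$, have arguments that are complex conjugates there and so combine into $\frac{1}{2\pi}\int f(u)\Re\frac{\Gam'}{\Gam}\!\left(\frac{1/2+\kap+iu}{2}\right) du$; and the $L'/L(1-s,\bar\chi)$ piece, after the substitution $s \mapsto 1-s$ (under which $g$ is invariant by evenness of $f$), expands via Dirichlet series and combines with the first prime sum to yield the symmetric combination $\hat f(\log n/(2\pi)) + \hat f(-\log n/(2\pi))$ shown in the statement. Rearranging the identity gives the desired formula.

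The main obstacle is the careful sign and residue bookkeeping: in particular, the shift of the first digamma integral to the critical line crosses a pole at $s = -\kap$ exactly when $\kap = 0$, and one must check how its residue interacts with the trivial-zero residue to produce the $(1-\kap)f(1/(2i))$ term in the proper form. Apart from this, the argument is a routine contour computation justified at every step by the decay and analyticity hypotheses on $f$.
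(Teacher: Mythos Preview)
Your approach is correct and is one of the two standard derivations of the Weil explicit formula. The paper takes the other route: it first proves a Mellin-transform version by multiplying the logarithmic derivative of the functional equation
\[
\log\frac{q}{\pi}+\tfrac12\frac{\Gam'}{\Gam}\!\left(\tfrac{\kap+s}2\right)+\tfrac12\frac{\Gam'}{\Gam}\!\left(\tfrac{\kap+1-s}2\right)+\frac{L'}{L}(s,\chi)+\frac{L'}{L}(1-s,\bar{\chi})=0
\]
by a test function $\tilde\phi(s)$ and integrating along the single line $\Re s=2-\del$, then shifting the individual pieces separately: the two digamma terms to $\Re s=\tfrac12$ (crossing a pole at $s=1$ when $\kap=0$) and the two $L'/L$ terms to $\Re s=-\del$ (crossing the nontrivial zeros). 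The Fourier statement then follows from the substitution $\phi(x)=\frac1{2\pi}x^{-1/2}\hat f(\log x/(2\pi))$. Your rectangular-contour version reaches the same destination but uses the functional equation only on the left edge; the paper's single-line organization treats the $\chi$ and $\bar\chi$ contributions symmetrically from the start and makes the residue bookkeeping you flag as ``the main obstacle'' a bit cleaner, at the cost of the Mellin detour.

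One inaccuracy worth fixing: you appeal to ``evenness of $f$'' to get $g(1-s)=g(s)$, but the lemma does not assume $f$ even. This does not actually break your argument---without evenness the substitution $s\mapsto 1-s$ leaves you with $g(1-w)$, and carrying that through the Dirichlet-series expansion is exactly what produces the $\hat f(-\log n/(2\pi))$ term paired with $\bar\chi(n)$, rather than a duplicate of the $\hat f(\log n/(2\pi))$ term. So drop the evenness claim and let the asymmetry do the work; it is the source of the two-term combination in the statement, not something to be symmetrized away.
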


\begin{proof}
This is a specialization of Theorem 5.12 in~\cite{iwakow}. We reproduce the proof here.

We begin by proving a version of this identity for Mellin transforms. This states that for a $C^{\infty}$ function $\phi:(0,+\infty)\to\cplx$ with compact support and Mellin transform $\tilde{\phi}(s)=\int_0^{\infty}\phi(x)x^{s-1}dx$, and setting $\psi(x)=x^{-1}\phi(x^{-1})$, we have
\begin{align*}
\sum_{\rho}\tilde{\phi}(\rho)= &\phi(1)\log\frac{q}{\pi}+(1-\kap)\tilde{\phi}(1)+\frac1{2\pi}\int_{-\infty}^{\infty}\tilde{\phi}(u)\Re\frac{\Gam'}{\Gam}\left(\frac{1/2+\kappa+iu}2\right)du\\
&-\sum_n(\Lam(n)\chi(n)\phi(n)+\Lam(n)\bar{\chi}(n)\psi(n)).
\end{align*}
To obtain this, we recall that the functional equation for $L(s,\chi)$ takes the form
\[
\left(\frac q{\pi}\right)^{s/2}\Gam\left(\frac{\kap+s}2\right)L(s,\chi)
=\eps(\chi)\left(\frac q{\pi}\right)^{(1-s)/2}\Gam\left(\frac{\kap+1-s}2\right)L(1-s,\bar{\chi})
\]
from which
\[
\log\frac{q}{\pi}+\frac12\frac{\Gam'}{\Gam}\left(\frac{\kap+s}2\right)+\frac12\frac{\Gam'}{\Gam}\left(\frac{\kap+1-s}2\right)+\frac{L'}{L}(s,\chi)+\frac{L'}{L}(1-s,\bar{\chi})=0.
\]
We multiply this equation by $\tilde{\phi}(s)$, integrate along the line $\Re(s)=2-\del$, and divide by $2\pi i$. The first term becomes $\phi(1)\log q$ by Mellin inversion. The line of integration for the second and third terms (with $\Gam'/\Gam$) may be shifted to $\Re(s)=1/2$ while picking up a single pole at $s=1$ if $\kap=0$ and nothing otherwise (giving the term $(1-\kap)\tilde{\phi}(1)$), after which the two terms in the integrand are conjugates and cancel to give $\tilde{\phi}(u)\Re\frac{\Gam'}{\Gam}\left(\frac{1/2+\kappa+iu}2\right)$ for $s=1/2+iu$. We shift the line of integration for the fourth and fifth terms (with $L'/L$) to $\Re(s)=-\del$, picking up a pole with residue $\tilde{\phi}(\rho)$ for each zero $\rho$ of $L$ on the $1/2$-line. Finally, the remaining integrals may be written as $\sum_n(\Lam(n)\chi(n)\phi(n)+\Lam(n)\bar{\chi}(n)\psi(n))$ by Mellin inversion.

The Fourier transform version follows from setting $\phi(x)=\frac{1}{2\pi}x^{-1/2}\hat{f}\left(\frac{\log x}{2\pi}\right)$, $x=e^{2\pi y}$.
\end{proof}

Let $\chi$ be a primitive character $\pmod{q}$ and let $N(t,\chi)$ be the number of zeros $1/2+i\gam$ of $L(s,\chi)$ lying in the interval $0\le\gam\le t$ on the critical line, so that the ``average'' value of $N(t+h,\chi)-N(t-h,\chi)$ is $\frac h{\pi}\log\frac{qt}{2\pi}$. We now apply the explicit formula to our characteristic function approximations $F_{\pm}$ to estimate the deviation of the actual value of $N(t+h,\chi)-N(t-h,\chi)$ from the average as a Dirichlet polynomial.

\begin{lem}
\label{zerostoprimesum}
Let $t\ge25$, $\Del\ge2$, and $0<h\le\sqrt{t}$. Let $\chi$ be a primitive character $\pmod{q}$. Then
\[
N(t+h,\chi)-N(t-h,\chi)-\frac h{\pi}\log\frac{qt}{2\pi}\le\frac{\log(qt)}{2\pi\Del}
-\frac1{\pi}\Re\left(\sum_{p\le e^{2\pi\Del}}\frac{\chi(p)\log p}{p^{1/2+it}}\hat{F}_+\left(\frac{\log p}{2\pi}\right)\right)+O(\log\Del)
\]
and
\[
N(t+h,\chi)-N(t-h,\chi)-\frac h{\pi}\log\frac{qt}{2\pi}\ge-\frac{\log(qt)}{2\pi\Del}
-\frac1{\pi}\Re\left(\sum_{p\le e^{2\pi\Del}}\frac{\chi(p)\log p}{p^{1/2+it}}\hat{F}_-\left(\frac{\log p}{2\pi}\right)\right)+O(\log\Del)
\]
where $F_{\pm}$ are the functions from Lemma~\ref{charfnapprox}.
\end{lem}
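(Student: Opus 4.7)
The plan is to apply Lemma~\ref{explicitformula} to the even entire function $f(u) = F_\pm(u-t)$, where $F_\pm$ are the majorant/minorant of $\chi_{[-h,h]}$ from Lemma~\ref{charfnapprox}. On the real line $f$ essentially equals $\chi_{[t-h,t+h]}$, so by property~1 of $F_\pm$,
\[
\sum_{\rho}F_-(\gam - t) \le N(t+h,\chi) - N(t-h,\chi) \le \sum_{\rho}F_+(\gam - t),
\]
up to negligible contributions from zeros with $\gam < 0$, which lie in the region $|\gam - t| \ge t \ge 2h$ where property~4 gives rapid decay. The explicit formula then expresses $\sum_\rho f(\gam)$ as a constant term, a boundary term, a $\Gam$-integral, and a Dirichlet sum over prime powers, each of which I would evaluate separately.

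For the constant term, property~2 gives $\hat f(0) = \hat{F}_\pm(0) = 2h + O(1/\Del)$, so $\frac{1}{2\pi}\hat f(0)\log(q/\pi) = \frac{h}{\pi}\log(q/\pi) + O(\log q / \Del)$. For the $\Gam$-integral, Stirling yields $\Re\frac{\Gam'}{\Gam}\bigl(\frac{1/2+\kap+iu}{2}\bigr) = \log(|u|/2) + O(1/|u|)$; since $f$ concentrates at $u = t$ (with $t \ge 2h$), translating by $t$ and using evenness of $F_\pm$ (so the linear term in the Taylor expansion of $\log|v+t|$ integrates to zero) gives $\frac{h}{\pi}\log(t/2) + O(\log t / \Del) + O(1/t)$. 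Adding these reproduces the subtracted average $\frac{h}{\pi}\log(qt/(2\pi))$, with combined error $\le \frac{\log(qt)}{2\pi\Del}$ to leading order. The boundary term $(1-\kap)f(1/(2i)) = (1-\kap)F_\pm(-i/2 - t)$ is controlled by property~4 as $\ll e^{\pi\Del}/(\Del t)^2$, which is absorbed into the $O(\log\Del)$ error under the stated hypotheses.

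For the prime sum, writing $\hat f(\xi) = \hat F_\pm(\xi) e^{-2\pi i t\xi}$ and using that $\hat F_\pm$ is real, even, and supported in $[-\Del,\Del]$ (property~3), the sum collapses to
\[
-\frac{1}{\pi}\Re\sum_{n \le e^{2\pi\Del}}\frac{\Lam(n)\chi(n)}{n^{1/2+it}}\hat{F}_\pm\bigl(\tfrac{\log n}{2\pi}\bigr).
\]
To reduce from prime powers $n = p^k$ to primes $p$, I would split off the $k \ge 2$ terms using the bound $|\hat F_\pm(x)| \ll 1/|x| + 1/\Del$ from property~3: the $k=2$ contribution is $\ll \sum_{p \le e^{\pi\Del}} 1/p + O(1) = O(\log\log e^{\pi\Del}) = O(\log\Del)$, and $k \ge 3$ converges absolutely to $O(1)$.

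The main obstacle is the error bookkeeping: verifying that the various $O(1/\Del)$ contributions from the constant and $\Gam$-integral terms combine to at most the exact coefficient $\frac{1}{2\pi}$ on $\log(qt)/\Del$ (rather than a larger implicit constant), and confirming that the boundary term, the Stirling remainder, and the truncation of prime powers all genuinely fit inside the $O(\log\Del)$ budget.
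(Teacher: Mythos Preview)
Your approach is essentially the same as the paper's: apply the explicit formula to $f(u)=F_\pm(u-t)$ and estimate each resulting term (the $\Gam$-integral via Stirling, the boundary term via Property~4, and the prime-power truncation giving the $O(\log\Del)$). The bookkeeping concern you flag about the exact coefficient $\tfrac{1}{2\pi}$ is handled in the paper by \emph{not} expanding $\hat{F}_\pm(0)=2h+O(1/\Del)$ prematurely: one keeps $\hat{F}_\pm(0)$ intact through both the $\log(q/\pi)$ term and the $\Gam$-integral (which a near/far split using Property~4 shows equals $\tfrac{1}{2\pi}\hat{F}_\pm(0)\log(t/2)+O(1)$), combines them into $\tfrac{1}{2\pi}\hat{F}_\pm(0)\log\tfrac{qt}{2\pi}$, and only then invokes the one-sided bound $\hat{F}_+(0)\le 2h+1/\Del$ (respectively $\hat{F}_-(0)\ge 2h-1/\Del$) coming from Properties~1 and~2, which yields exactly $\tfrac{h}{\pi}\log\tfrac{qt}{2\pi}\pm\tfrac{\log(qt)}{2\pi\Del}$.
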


\begin{proof}
Let $f(s)=F_{\pm}(s-t)$, so that $\hat{f}(x)=\hat{F}_{\pm}(x)e^{-2\pi ixt}$. We have
\[
\sum_{\rho}F_-(\gam-t)\le N(t+h,\chi)-N(t-h,\chi)=\sum_{\rho}\chi_{[-h,h]}(\gam-t)\le\sum_{\rho}F_+(\gam-t).
\]
Since $F_{\pm}$ is real and even on the real line, $\hat{F}_{\pm}$ is also real and even. Hence
\[
\hat{f}\left(\frac{\log n}{2\pi}\right)+\hat{f}\left(-\frac{\log n}{2\pi}\right)=n^{-it}\hat{F}_{\pm}\left(\frac{\log n}{2\pi}\right)+n^{it}\hat{F}_{\pm}\left(-\frac{\log n}{2\pi}\right)=\Re\left(\frac{1}{n^{it}}\hat{F}_{\pm}\left(\frac{\log n}{2\pi}\right)\right)
\]
so that using the explicit formula from Lemma~\ref{explicitformula}, we find
\begin{align*}
\sum_{\rho}F_{\pm}(\gam-t)= &\frac1{2\pi}\hat{F}_{\pm}(0)\log\frac{q}{\pi}+(1-\kap)F_{\pm}\left(\frac1{2i}-t\right)-\frac1{\pi}\Re\left(\sum_{n=2}^{\infty}\frac{\Lam(n)\chi(n)}{n^{1/2+it}}\hat{F}_{\pm}\left(\frac{\log n}{2\pi}\right)\right) \\
&+\frac1{2\pi}\int_{-\infty}^{\infty}F_{\pm}(u-t)\Re\frac{\Gam'}{\Gam}\left(\frac{1/2+\kap+iu}2\right)du.
\end{align*}
We have $F_{\pm}\left(\frac1{2i}-t\right)\ll\frac1{\Del^2t^2}$ from Property 4 of Lemma~\ref{charfnapprox}. Also, as shown in~\cite{goldgon}, we have that
\[
\frac1{2\pi}\int_{-\infty}^{\infty}F_{\pm}(u-t)\Re\frac{\Gam'}{\Gam}\left(\frac{1/2+\kap+iu}2\right)du=\frac1{2\pi}\log\frac t2\hat{F}_{\pm}(0)+O(1).
\]
We reproduce the proof here, using Stirling's approximation for $\Gam'/\Gam$. From Property 4 of Lemma~\ref{charfnapprox}, we have
\[
\int_{t+4\sqrt{t}}^{\infty}F_{\pm}(u-t)\Re\frac{\Gam'}{\Gam}\left(\frac{1/2+\kap+iu}2\right)du\ll\int_{t+4\sqrt{t}}^{\infty}\frac{\log(u+2)}{\Del^2(u-t)^2}du
\ll\frac{\log t}{\sqrt{t}}
\]
and similarly for the integral from $-\infty$ to $t-4\sqrt t$, while
\begin{align*}
\int_{t-4\sqrt t}^{t+4\sqrt t}F_{\pm}(u-t)\Re\frac{\Gam'}{\Gam}\left(\frac{1/2+\kap+iu}2\right)du 
&= \int_{t-4\sqrt t}^{t+4\sqrt t}F_{\pm}(u-t)\left(\log\frac u2 + O(1/u)\right)du \\
&= \int_{t-4\sqrt t}^{t+4\sqrt t}F_{\pm}(u-t)\left(\log\frac t2 + O(1/t)\right)du \\
&= \int_{-\infty}^{\infty}F_{\pm}(u-t)\log\frac t2du+O\left(\frac{\log(t/2)+2h}{t}\right)\\
&= \left(\log\frac t2\right)\hat{F}(0)+O(1).
\end{align*}
Finally, since $\hat{F}_{\pm}\left(\frac{\log n}{2\pi}\right)=0$ for $n\ge e^{2\pi\Del}$, we have
\[
\Re\left(\sum_{n=2}^{\infty}\frac{\Lam(n)\chi(n)}{n^{1/2+it}}\hat{F}_{\pm}\left(\frac{\log n}{2\pi}\right)\right)
=\Re\left(\sum_{n\le e^{2\pi\Del}}\frac{\Lam(n)\chi(n)}{n^{1/2+it}}\hat{F}_{\pm}\left(\frac{\log n}{2\pi}\right)\right)
\]
\[
=\Re\left(\sum_{p\le e^{2\pi\Del}}\frac{\Lam(p)\chi(p)}{p^{1/2+it}}\hat{F}_{\pm}\left(\frac{\log p}{2\pi}\right)\right)
+\Re\left(\sum_{p\le e^{\pi\Del}}\frac{\Lam(p)\chi(p)}{p^{1+2it}}\hat{F}_{\pm}\left(\frac{\log p}{\pi}\right)\right)+O(1)
\]
\[
=\Re\left(\sum_{p\le e^{2\pi\Del}}\frac{\Lam(p)\chi(p)}{p^{1/2+it}}\hat{F}_{\pm}\left(\frac{\log p}{2\pi}\right)\right)+O(\log\Del).
\]
\end{proof}

Finally, we need the following version of the large sieve inequality.

\begin{defn}
A set of points $\{s_r^{\chi}=\sg_r^{\chi}+it_r^{\chi}\}$, where $r=1,\dotsc,R_{\chi}$ and $\chi$ ranges over the characters $\pmod{q}$, is well-separated if, for any given $\chi$, we have $|t_i^{\chi}-t_j^{\chi}|\ge1$ for all $i\neq j$.
\end{defn}

\begin{prop}
\label{sieve}
Let $A(s,\chi)=\sum_{p\le N}a(p)\chi(p)p^{-s}$ be a twisted Dirichlet polynomial. Let $T$ be large and let $\{s_r^{\chi}\}$ be a set of well-separated points with $T<t_1^{\chi}<t_2^{\chi}<\dotsb<t_{R_{\chi}}^{\chi}\le 2T$ for all $\chi$ and $\sg_r^{\chi}\ge\al$. Then for any $k$ with $N^k\le qT$, we have
\[
\sum_{\chi}\sum_{r=1}^{R_{\chi}}|A(s_r^{\chi},\chi)|^{2k}\ll \phi(q)T(\log T)^2k!\left(\sum_{p\le N}|a(p)|^2p^{-2\al}\right)^k.
\]
\end{prop}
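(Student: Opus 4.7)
The plan is to reduce the $2k$-th moment of $A$ to the second moment of the single longer Dirichlet polynomial $A^k$, and then invoke the standard hybrid large sieve for characters modulo $q$ at well-separated ordinates. Since $|A(s_r^\chi,\chi)|^{2k}=|A(s_r^\chi,\chi)^k|^2$, expand
\[
A(s,\chi)^k=\sum_{n\le N^k} b_k(n)\chi(n)n^{-s},\qquad b_k(n)=\sum_{\substack{p_1\cdots p_k=n\\ p_i\le N}}a(p_1)\cdots a(p_k),
\]
so that the problem is reduced to bounding the second moment of a Dirichlet polynomial of length at most $N^k$ in a twisted setting.

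The first real step is the combinatorial estimate
\[
\sum_n |b_k(n)|^2 n^{-2\alpha}\le k!\left(\sum_{p\le N}|a(p)|^2 p^{-2\alpha}\right)^k,
\]
which I would verify by Cauchy--Schwarz: each integer $n$ is expressible as an ordered product $p_1\cdots p_k$ of primes in at most $k!$ ways (with equality exactly for squarefree $n$ having $k$ prime factors), so $|b_k(n)|^2\le k!\sum_{p_1\cdots p_k=n}\prod_i |a(p_i)|^2$; summing over $n$ and reindexing by the $k$-tuple yields the bound.

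Next, I would apply the hybrid large sieve in the form
\[
\sum_\chi\sum_r |B(s_r^\chi,\chi)|^2 \ll \bigl(\phi(q)T + M\bigr)(\log qT)^2 \sum_n |c(n)|^2 n^{-2\alpha}
\]
valid for any Dirichlet polynomial $B(s,\chi)=\sum_{n\le M}c(n)\chi(n)n^{-s}$ and well-separated points $\{s_r^\chi\}$ with $T<t_r^\chi\le 2T$ and $\sigma_r^\chi\ge\alpha$. Taking $B=A^k$ and $M=N^k$, the hypothesis $N^k\le qT$ lets the $\phi(q)T$ term absorb the length term $M$; combining with the combinatorial bound then gives the proposition.

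The main obstacle I anticipate is handling the varying real parts $\sigma_r^\chi\ge\alpha$ in the large sieve, since the standard duality proof is cleanest at a fixed $\sigma$. I would resolve this by factoring $n^{-s_r^\chi}=n^{-\alpha}\cdot n^{-(s_r^\chi-\alpha)}$, absorbing $n^{-\alpha}$ into the coefficients, and arguing that the extra factor $n^{-(s_r^\chi-\alpha)}$ with $\Re(s_r^\chi-\alpha)\ge 0$ does not inflate the sieve bound (either via a short Mellin-smoothing or by appealing to Montgomery's hybrid large sieve in the form that already permits this $\sigma$-flexibility). The combinatorial input and the final arithmetic using $N^k\le qT$ are then routine.
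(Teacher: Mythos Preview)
Your plan is correct and matches the paper's overall strategy: raise $A$ to the $k$th power, bound $\sum_n|b_k(n)|^2n^{-2\alpha}$ by $k!\bigl(\sum_p|a(p)|^2p^{-2\alpha}\bigr)^k$ (your Cauchy--Schwarz argument is equivalent to the paper's multinomial computation), and apply a mean-value inequality using $N^k\le qT$ to absorb the length term. The only genuine difference lies in the obstacle you yourself flag, the varying real parts $\sigma_r^\chi$. Rather than quoting a ready-made hybrid large sieve for well-separated points and then patching in $\sigma$-flexibility by Mellin-smoothing, the paper handles both issues in one stroke with a subharmonicity trick: since $|B|^2=|A^k|^2$ is subharmonic, one has $|B(s_r^\chi,\chi)|^2\le\frac{(\log T)^2}{\pi}\iint_{D_{1/\log T}(s_r^\chi)}|B|^2$, and the disjoint disks all sit in the half-strip $\sigma\ge\alpha-1/\log T$, $T-1\le t\le 2T+1$. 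Summing and applying Montgomery's \emph{continuous} mean-value theorem (his Equation~6.14, giving $\phi(q)(T+O(N^k/q))\sum_n|c_n|^2n^{-2\sigma}$), then integrating in $\sigma$, finishes directly. This buys a self-contained argument that delivers the factor $(\log T)^2$ rather than $(\log qT)^2$ and avoids any black-box citation for the variable-$\sigma$ case; your route would also work but requires locating or reproving the sieve in the precise form you state.
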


\begin{proof}
%Same as M and M Proposition 4, using Montgomery Theorem 6.4 instead of 6.1.
The proof follows that of Proposition 4 in~\cite{maimont}; it is identical except for the inclusion of a summation over characters. Let 
\[
B(s,\chi)=A(s,\chi)^k=\sum_{n\le N^k}c_n\chi(n)n^{-s}
\] 
and let $D_r(s)$ denote the disk of radius $r$ centered at $s$. By the mean-value property of harmonic functions and the Cauchy-Schwarz inequality, we have
\[
|B(s,\chi)|^2\le\frac{(\log T)^2}{\pi}\iint_{D_{1/\log T}(s)}|B(x+iy,\chi)|^2dxdy
\]
for all $s$. Since the disks $D_{1/\log T}(s_r^{\chi})$ are disjoint and lie in the half-strip $\sg\ge\al-1/\log T$, between the lines $t=T-1$ and $t=2T+1$, we can write
\[
\sum_{\chi}\sum_{r=1}^{R_{\chi}}|B(s_r^{\chi},\chi)|^2\ll(\log T)^2\int_{\al-1/\log T}^{\infty}\sum_{\chi}\int_{T-1}^{2T+1}|B(\sg+it,\chi)|^2dtd\sg.
\]
By the proof of Theorem 6.4 of~\cite{mont}, specifically Equation 6.14, we have
\[
\sum_{\chi}\int_{T-1}^{2T+1}|B(\sg+it,\chi)|^2dt=\phi(q)(T+O(N^k/q))\sum_n\frac{|c_n|^2}{n^{2\sg}}
\]
which, using the condition that $N^k\le qT$, plugging back into the previous inequality, and integrating with respect to $\sg$, gives
\[
\sum_{\chi}\sum_{r=1}^{R_{\chi}}|B(s_r^{\chi},\chi)|^2\ll\phi(q)T(\log T)^2\sum_n\frac{|c_n|^2}{n^{2\al}\log n}\le\phi(q)T(\log T)^2\sum_n\frac{|c_n|^2}{n^{2\al}}
\]
(since $c_1=0$). If $n$ has prime factorization $p_1^{e_1}\dotsb p_m^{e_m}$ with $e=e_1+\dotsb e_m$, then we can explicitly write
\[
c_n=\binom{e}{e_1,\dotsc,e_m}\prod_{i=1}^ma(p_i)^{e_i}
\]
so
\begin{align*}
\sum_n\frac{|c_n|^2}{n^{2\al}} &=\sum_n\binom{e}{e_1,\dotsc,e_m}^2\prod_{i=1}^m\frac{|a(p_i)|^{e_i}}{p_i^{2k_i\al}} \\
&\le k!\sum_n\binom{e}{e_1,\dotsc,e_m}\prod_{i=1}^m\frac{|a(p_i)|^{e_i}}{p_i^{2k_i\al}} = k!\left(\sum_{p\le N}|a(p)|^2p^{-2\al}\right)^k
\end{align*}
and we are done.
\end{proof}

\section{Point count bounds}
\label{pointcount}

In this section, we use Lemma~\ref{zerostoprimesum} and Lemma~\ref{sieve} to upper-bound the frequency with which an ordinate $t$ may have an abnormal number of zeros of $L(s,\chi)$ near it. We introduce some quick notation.

\begin{defn}
\label{range}
%If $q_1$ is the conductor of $\chi\pmod{q}$, let 
Let
\[
%a(T,\chi)=\max\{(\log\log(q_1T))^2,\sqrt{\log q}(\log\log(qT))^2\},\hspace{0.2in} b(T,\chi)=\frac{\log(q_1T)}{\log\log(q_1T)}.
%a(T,\chi)=\sqrt{\log q}(\log\log(qT))^2,\hspace{0.2in} b(T,\chi)=\frac{\log(q_1T)}{\log\log(q_1T)}.
a(T,q)=\sqrt{\log q}(\log\log(qT))^2,\hspace{0.2in} b(T,q)=\frac{\log(qT)}{\log\log(qT)}.
\]
\end{defn}

We will also need the following elementary inequality.

\begin{lem}
\label{elem1}
Let $a(T,q)\le V\le b(T,q)$, $\eta=1/\log V$, $k=\left\lfloor\frac{V}{1+\eta}\right\rfloor$. Then we have
\[
k(\log(k\log\log qT)-2\log(\eta V)\le -V\log\left(\frac{V}{\log\log qT}\right)+2V\log\log V+V.
\]
\end{lem}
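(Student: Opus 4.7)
The plan is direct algebra, exploiting the identity $\eta\log V = 1$. Write $L = \log\log qT$ and $u = V/(1+\eta)$, so that $u - 1 \le k \le u$. First, I would expand the LHS using $\log(\eta V) = \log V - \log\log V$:
\[
k\bigl(\log(kL) - 2\log(\eta V)\bigr) = k\log(kL) - 2k\log V + 2k\log\log V.
\]
For $V$ large enough that $kL \ge 1$ and $\log\log V > 0$, the first and third pieces are non-negative while the second is negative. Applying $k \le u$ to the first and third pieces and $k \ge u - 1$ to the second yields, after using $\log u = \log V - \log(1+\eta)$,
\[
k\bigl(\log(kL) - 2\log(\eta V)\bigr) \le u\bigl[-\log(V/L) + 2\log\log V - \log(1+\eta)\bigr] + 2\log V.
\]

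Second, I would write $u = V - V\eta/(1+\eta)$ and use $V\eta\log V = V$ to evaluate the cross-terms, so that the right side above expands to
\[
V\bigl[-\log(V/L) + 2\log\log V\bigr] + \frac{V}{1+\eta} - \frac{V(\log L + 2\log\log V)}{(1+\eta)\log V} - u\log(1+\eta) + 2\log V.
\]
The first summand matches the leading part of the claimed RHS, so the task reduces to bounding the remaining four terms by $V$. Dropping the two non-positive terms, it is enough to verify
\[
\frac{V}{1+\eta} + 2\log V \le V, \qquad \text{equivalently,} \qquad 2(1+\eta)(\log V)^2 \le V.
\]

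Third, since $\eta \le 1$ (i.e.\ $V \ge e$), this reduces to $V \ge 4(\log V)^2$, which holds for $V$ sufficiently large---in particular throughout the range $a(T,q) \le V \le b(T,q)$ once $qT$ is large enough for the lemma to be of interest. The main technical subtlety lies in Step 1: the bracket $\log(kL) - 2\log(\eta V)$ changes sign across the range of $V$ (positive for $V$ near $a(T,q)$, negative for $V$ near $b(T,q)$), so one must apply the two-sided bounds on $k$ to the individual sub-terms rather than to the bracket as a whole, to maintain an upper bound on the product.
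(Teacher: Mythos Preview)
Your argument is correct. The paper takes a slightly different route: it first bounds the entire bracket by $-\log(V/L)+2\log\log V$ using $k\le V$, asserts that this quantity is $\le 0$, and then multiplies by the lower bound $k\ge V(1-\eta)$; the identity $\eta V\log V=V$ then produces the final $+V$ on the right with no extra remainder. You instead split the bracket into its three monomials and apply the two one-sided estimates $k\le u$ and $k\ge u-1$ term by term, which sidesteps any appeal to the sign of the bracket. This buys genuine robustness: as you observe, the quantity $-\log(V/L)+2\log\log V$ can be positive near $V=a(T,q)$ when $q$ is small, so the paper's replacement of $k$ by $V(1-\eta)$ is not justified there, whereas your version only needs the $L$-independent threshold $V\gtrsim(\log V)^2$. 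The trade-off is a little more bookkeeping (the stray $2\log V$ that must then be absorbed) and one minor point of phrasing: the bound $k\log(kL)\le u\log(uL)$ relies on monotonicity of $x\mapsto x\log(xL)$ rather than on non-negativity of the term, but this holds as soon as $kL\ge e^{-1}$, which is covered by your assumption $kL\ge 1$.
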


\begin{proof}
This calculation is Proposition 14 in~\cite{baladero}. We reproduce the proof here. Since $k\le V$, we have\[\log(k\log\log(qT))-2\log(\eta V)\le-\log\left(\frac{V}{\log\log(qT)}\right)+2\log\log V\le0\]but also $k\ge\frac{V}{1+\eta}-1\ge V(1-\eta)$, so\[k(\log(k\log\log(qT))-2\log(\eta V))\le V(1-\eta)\left(-\log\left(\frac{V}{\log\log(qT)}\right)+2\log\log V\right)\]\[\le-V\log\left(\frac{V}{\log\log(qT)}\right)+2V\log\log V+\eta V\log V=-V\log\left(\frac{V}{\log\log(qT)}\right)+2V\log\log V+V.\]
\end{proof}

All our statements about the distribution of zeros of $L(s,\chi)$ assume that $\chi$ is primitive, but of course Perron's formula for $M(x;q,a)$ includes the imprimitive characters $\pmod{q}$ as well. To deal with this, we just note that there is not much difference between the behavior of $\chi$ and $\chi_1$ where $\chi_1$ is the primitive character inducing $\chi$.

\begin{lem}
\label{nonprimerr}
For any character $\chi\pmod{q}$, let $\chi_1$ be the primitive character inducing $\chi$ and $q_1$ be the conductor of $\chi$. 
\begin{enumerate}
\item For any twisted Dirichlet polynomial $A(s,\chi)=\sum_{p\le N}a(p)\chi(p)p^{-s}$ and $\sg\ge1/2$, we have
\[
|A(\sg+it,\chi_1)-A(\sg+it,\chi)|\ll(\max_pa(p))\frac{\sqrt{\log q}}{\log\log q}.
\]
\item If $l_d(s,\chi)=\prod_{p|d}\left(1-\frac{\chi(p)}{p^s}\right)$, then we have
\[
\log|l_d(s,\chi)|=O\left(\frac{\sqrt{\log d}}{\log\log d}\right)
\]
and, as a consequence,
\[
\log|L(s,\chi)| - \log|L(s,\chi_1)| = O\left(\frac{\sqrt{\log q}}{\log\log q}\right).
\]
\end{enumerate}
\end{lem}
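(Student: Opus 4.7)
The plan is to reduce both parts of the lemma to a single arithmetic estimate, namely
\[
\sum_{p \mid n} p^{-1/2} \ll \frac{\sqrt{\log n}}{\log\log n},
\]
and then apply it mechanically. To prove this estimate, I would observe that for fixed $\omega(n)=k$ the sum $\sum_{p\mid n}p^{-1/2}$ is maximized when $n$ is the product of the first $k$ primes $p_1,\dotsc,p_k$. By the Prime Number Theorem, $\log n = \sum_{i \le k} \log p_i \sim p_k$, so $p_k \asymp \log n$, and a standard Mertens-type estimate yields $\sum_{i\le k} p_i^{-1/2}\ll \sqrt{p_k}/\log p_k \ll \sqrt{\log n}/\log\log n$.

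For Part 1, I would exploit the fact that $\chi$ and $\chi_1$ agree on all primes coprime to $q$, while $\chi(p)=0$ for $p \mid q$. Hence
\[
A(\sg+it,\chi_1)-A(\sg+it,\chi) = \sum_{\substack{p \le N \\ p \mid q,\, p \nmid q_1}} a(p)\chi_1(p)p^{-\sg-it},
\]
whose modulus is at most $(\max_p |a(p)|) \sum_{p \mid q} p^{-\sg} \le (\max_p |a(p)|) \sum_{p \mid q} p^{-1/2}$ since $\sg \ge 1/2$, and the arithmetic estimate above closes the argument.

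For Part 2, since $|\chi(p)p^{-s}| \le 2^{-1/2} < 1$ for $\Re(s)\ge 1/2$, the Taylor series for $\log(1-z)$ yields
\[
\bigl|\log|1-\chi(p)p^{-s}|\bigr| \le \bigl|\log(1-\chi(p)p^{-s})\bigr| \le \sum_{k\ge 1}\frac{p^{-k/2}}{k} \ll p^{-1/2}.
\]
Summing over $p \mid d$ and invoking the arithmetic estimate one more time gives $\log|l_d(s,\chi)|= O(\sqrt{\log d}/\log\log d)$. The consequence for $L(s,\chi)$ then follows from the Euler product identity
\[
L(s,\chi) = L(s,\chi_1) \prod_{\substack{p \mid q \\ p \nmid q_1}} \bigl(1-\chi_1(p)p^{-s}\bigr),
\]
in which the finite product has exactly the same shape as $l_d$ but indexed by an integer whose prime divisors all divide $q$; taking logs and applying the same bound finishes the job.

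There is essentially no serious obstacle here: the whole lemma is a routine combination of the elementary prime-sum bound with the standard relation between an imprimitive character and its inducing primitive character. The only mildly delicate moment is ensuring, in Part 2, that $|\chi(p)p^{-s}|$ stays bounded away from $1$ so that the Taylor expansion of $\log(1-z)$ converges absolutely, but this is immediate from $\sg \ge 1/2$ and $p \ge 2$.
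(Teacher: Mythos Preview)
Your proposal is correct and follows essentially the same route as the paper: both reduce to the estimate $\sum_{p\mid n}p^{-1/2}\ll\sqrt{\log n}/\log\log n$, prove it via the primorial comparison combined with $\sum_{p\le X}p^{-1/2}\ll\sqrt{X}/\log X$, and then apply it identically in Parts~1 and~2. Your handling of Part~2 is in fact slightly more explicit than the paper's, since you bound $|\log(1-z)|$ via the Taylor series to obtain the two-sided $O$-estimate directly, whereas the paper only writes out the upper inequality $\log|1-z|\le|z|$.
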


\begin{proof}
\begin{enumerate}
\item We can write
\[
|A(\sg+it,\chi_1)-A(\sg+it,\chi)|=\left|\sum_{p|q}\frac{a(p)\chi_1(p)}{p^{\sg+it}}\right|\le\max_pa(p)\sum_{p|q}\frac1{p^{1/2}}.
\]
We then have
\[
\sum_{p|q}\frac1{p^{1/2}}\le\sum_{p\le2\log q}\frac1{p^{1/2}}\ll\frac{\sqrt{\log q}}{\log\log q}.
\]

\item We can write
\[
\log|l_d(s,\chi)|=\sum_{p|d}\log\left|1-\frac{\chi(p)}{p^s}\right|\le \sum_{p|d}\frac1{p^{1/2}}=O\left(\frac{\sqrt{\log d}}{\log\log d}\right)
\]
as in part 1, and 
\[
L(s,\chi)=L(s,\chi_1)\prod_{p|q}\left(1-\frac{\chi_1(p)}{p^s}\right)=L(s,\chi_1)l_q(s,\chi_1).
\]
\end{enumerate}
\end{proof}

We are now ready to formulate the analog for Dirichlet $L$-functions of the key estimate in Soundararajan's proof. 

\begin{defn}
\label{vtypical}
Let $q\in\nats$ and let $\chi$ be a character $\pmod{q}$ induced by primitive $\chi_1\pmod{q_1}$. Let $T>e$, $0<\del\le1$, and $a(T,q)\le V\le b(T,q)$.

An ordinate $t\in[T,2T]$ is $(V,\del,\chi)$-typical of order $T$ if it satisfies the following conditions:

i. Let $y=(qT)^{1/V}$. For all $\sg\ge\frac12$, we have 
\[
\left|\sum_{n\le y}\frac{\chi_1(n)\Lambda(n)}{n^{\sg+it}\log n}\frac{\log(y/n)}{\log y}\right|\le2V.
\]

ii. Every sub-interval of $(t-1,t+1)$ of length $\frac{2\del\pi V}{\log(qT)}$ contains at most $(1+\del)V$ ordinates of zeroes of $L(s,\chi)$.

iii. Every sub-interval of $(t-1,t+1)$ of length $\frac{2\pi V}{\log V\log(qT)}$ contains at most $V$ ordinates of zeroes of $L(s,\chi)$.
\end{defn}

Note that this definition differs from the one used in~\cite{halupsu}, in that it sets $y=(qT)^{1/V}$ rather than $T^{1/V}$. When there is no risk of confusion, we will shorten ``$(V,\del,\chi)$-typical'' to ``$V$-typical''.

Our next two propositions give an upper bound for the size of any set of well-separated $V$-atypical ordinates. First we bound the size of any set of well-separated ordinates with a given accumulation of zeros in intervals of given size centered on the ordinates.

\begin{prop}
\label{zerocount}
Let $\chi$ be a character $\pmod{q}$ with conductor $q_1$, $T$ be large, $h$ be such that $0<h\le\sqrt T$, $a(T,q)\le V\le b(T,q)$, and $\{t_r^{\chi}\}$ be well-separated ordinates with $T<t_1^{\chi}<t_2^{\chi}<\dotsb<t_{R_{\chi}}^{\chi}\le 2T$ for all $\chi$, such that for all $\chi$ and $r$ we have
\[
N(t_r^{\chi}+h)-N(t_r^{\chi}-h,\chi)-\frac{h}{\pi}\log\frac{q_1t_r^{\chi}}{2\pi}\ge V+O(1).
\]
Let $R=\sum_{\chi}R_{\chi}$. Then
\[
R\ll\phi(q) T\exp\left(-V\log\frac{V}{\log\log(qT)}+2V\log\log V+O(V)\right).
\]
\end{prop}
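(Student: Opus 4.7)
The plan is to apply the explicit formula from Lemma \ref{zerostoprimesum} to extract from the zero-accumulation hypothesis a lower bound on a short Dirichlet polynomial evaluated at each $t_r^{\chi}$, and then to apply the large sieve from Proposition \ref{sieve} to bound the number of such ordinates.

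I first fix parameters $\eta = 1/\log V$, $k = \lfloor V/(1+\eta)\rfloor$, and $\Del = \log(qT)/(2\pi k)$, so that $N = e^{2\pi\Del} = (qT)^{1/k}$ and $N^k = qT$, satisfying the hypothesis of Proposition \ref{sieve}. Working with the primitive character $\chi_1$ inducing $\chi$ (which has the same critical-line zeros as $\chi$, with conductor $q_1 \mid q$), Lemma \ref{zerostoprimesum} yields
\[
V + O(1) \le \frac{\log(q_1 t_r^{\chi})}{2\pi\Del} - \frac{1}{\pi}\Re A(1/2+it_r^{\chi}, \chi_1) + O(\log\Del),
\]
where $A(s,\chi_1) = \sum_{p \le N} a(p)\chi_1(p) p^{-s}$ with $a(p) = \log p \cdot \hat{F}_+(\log p/(2\pi))$. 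Since $\log(qT)/(2\pi\Del) = k \le V/(1+\eta)$, rearranging forces $|A(1/2+it_r^{\chi}, \chi_1)| \gg \eta V$, the $O(\log\Del) = O(\log\log(qT))$ error being negligible in the allowed range where $\eta V = V/\log V$ dominates.

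Because $\max_p |a(p)| = O(1)$ (using $|\hat{F}_+(x)| \ll 2h + 1/\Del$ together with $h\Del \ll 1$), Lemma \ref{nonprimerr} shows that replacing $\chi_1$ by $\chi$ alters $A$ by only $O(\sqrt{\log q}/\log\log q) = o(\eta V)$ whenever $V \ge a(T,q)$. Thus $|A(1/2+it_r^{\chi}, \chi)| \gg \eta V$ for every $(r,\chi)$ in our set. Raising to the $2k$-th power and summing, Proposition \ref{sieve} produces
\[
R (\eta V)^{2k} \ll \phi(q) T (\log T)^2 k! \left(\sum_{p \le N} |a(p)|^2/p\right)^k.
\]
A direct estimate using the explicit form of $\hat{F}_+$ from Lemma \ref{charfnapprox} bounds $\sum_{p \le N} |a(p)|^2/p \ll \log\log(qT)$. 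Taking logarithms, applying Stirling's approximation to $k!$, and then invoking Lemma \ref{elem1} to simplify $k(\log(k\log\log(qT)) - 2\log(\eta V))$ produces the claimed bound.

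The hard part will be coordinating the parameters so that $\Del$ simultaneously satisfies $\Del \ge 2$ (required by Lemma \ref{zerostoprimesum}) and $N^k \le qT$ (required by Proposition \ref{sieve}), while the Dirichlet polynomial lower bound dominates every error term. The slack factor $\eta = 1/\log V$ plays a delicate dual role: it must be small enough that the loss $(\eta V)^{-2k}$ does not overwhelm the gain from the sieve, yet large enough that the $O(\log\log(qT))$ errors from the character switch and from Lemma \ref{zerostoprimesum} get absorbed into $\eta V$. Verifying the bound $\sum_p |a(p)|^2/p \ll \log\log(qT)$ requires a careful but routine computation, splitting into the ranges $p \le e^{1/h}$ and $p > e^{1/h}$.
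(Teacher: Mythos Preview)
Your approach is essentially identical to the paper's: the same parameters $\eta=1/\log V$, $k=\lfloor V/(1+\eta)\rfloor$, and $\Del\approx(1+\eta)\log(qT)/(2\pi V)$, the same passage from $\chi_1$ to $\chi$ via Lemma~\ref{nonprimerr}, and the same endgame through Proposition~\ref{sieve} and Lemma~\ref{elem1}. One small correction: your justification for $\max_p|a(p)|=O(1)$ via ``$h\Del\ll1$'' is not valid, since the proposition allows any $0<h\le\sqrt{T}$; the bound instead follows (as the paper uses without comment) from $|\hat F_+(x)|\le 1/(\pi|x|)+O(1/\Del)$ and $\log p\le 2\pi\Del$, which also renders the proposed range-splitting for $\sum_p|a(p)|^2/p$ unnecessary.
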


\begin{proof}
By Lemma~\ref{zerostoprimesum}, we have for all $\Del\ge2$ that 
\begin{align*}
V+O(1) &\le N(t_r^{\chi}+h)-N(t_r^{\chi}-h,\chi)-\frac{h}{\pi}\log\frac{q_1t_r^{\chi}}{2\pi} \\
&\le\frac{\log(2qT)}{2\pi\Delta}+\left|\frac1{\pi}\sum_{p\le e^{2\pi\Del}}\frac{\chi_1(p)\log p}{p^{1/2+it_r^{\chi}}}\hat{F}_+\left(\frac{\log p}{2\pi}\right)\right|+O(\log\Del).
\end{align*}
Let $a(p)=\frac{\log p}{\pi}\hat{F}_+\left(\frac{\log p}{2\pi}\right)$, so that $|a(p)|\le4$ and
\[
\left|\sum_{p\le e^{2\pi\Del}}\frac{a(p)\chi_1(p)}{p^{1/2+it_r^{\chi}}}\right|\ge V-\frac{\log(2qT)}{2\pi\Del}+O(\log\Del)
\]
which implies by Lemma~\ref{nonprimerr} that
\[
\left|\sum_{p\le e^{2\pi\Del}}\frac{a(p)\chi(p)}{p^{1/2+it_r^{\chi}}}\right|\ge V-\frac{\log(2qT)}{2\pi\Del}+O\left(\log\Del+\frac{\sqrt{\log q}}{\log\log q}\right).
\]
Let $\eta=1/\log V$ and $\Del=\frac{(1+\eta)\log(qT)}{2\pi V}$. We have $\exp(2\pi\Del)=(qT)^{(1+\eta)/V}$ and $\log\Del\ll\log\log(qT)\le\sqrt{V}$. Also 
\[
\eta V=\frac{V}{\log V}\ge\frac{2\sqrt{\log q}(\log\log(qT))^2}{\log\log q+4\log\log\log(qT)}\ge\sqrt{\log q}
\]
so for $q$ sufficiently large, we have 
\[
\frac{\sqrt{\log q}}{\log\log q}\le\frac{\sqrt{\log q}}{100}\le\frac1{100}\eta V.
\]
On the other hand, for small $q$, we also have 
\[
\frac{\sqrt{\log q}}{\log\log q}\le\frac1{100}\eta V
\]
for $T$ sufficiently large. Therefore, as long as $qT$ is sufficently large, we have
\[
V-\frac{\log(2qT)}{2\pi\Del}+O\left(\log\Del+\frac{\sqrt{\log q}}{\log\log q}\right)\ge V-\frac{V\log(2qT)}{(1+\eta)\log(qT)}+\frac{\eta V}{100}+O(\sqrt{V})
\]
\[
\ge\frac{\eta V}{1+\eta}-\frac{\log 2}{(1+\eta)\log\log T}+\frac{\eta V}{100}+O(\sqrt{V})\ge\frac12\eta V.
\]
%As in H and S, and noting that $\eta V=\frac{V}{\log V}\ge\frac{\sqrt{\log q}\log\log q}{\log\log q}=\sqrt{\log q}$, the RHS comes out to at least $\frac12\eta V$. 
Let $k=\left\lfloor\frac{V}{1+\eta}\right\rfloor$, so that in fact $\exp(2\pi\Del)^k\le T$ and we can apply Proposition~\ref{sieve}. This gives
\[
R\left(\frac12\eta V\right)^{2k}\le\sum_{\chi}\sum_r\left|\sum_{p\le(qT)^{(1+\eta)/V}}\frac{a(p)}{p^{1/2+it_r^{\chi}}}\right|^{2k}\ll\phi(q) T(\log T)^2(Ck\log\log(qT))^k.
\]
Using Lemma~\ref{elem1}, this implies
\[
R\ll\phi(q)T\exp\left(-V\log\frac{V}{\log\log(qT)}+2V\log\log V+O(V)\right).
\]
\end{proof}

We now translate this bound into one for $V$-atypical ordinates.

\begin{prop}
\label{vtypicalbd}
Let $\chi$ be a character $\pmod{q}$, let $T$ be large, let $a(T,q)\le V\le b(T,q)$, and let $\{t_r^{\chi}\}$ be well-separated $V$-atypical ordinates with $T<t_1^{\chi}<t_2^{\chi}<\dotsb<t_{R_{\chi}}^{\chi}\le 2T$ for all $\chi$. Let $R=\sum_{\chi}R_{\chi}$. Then
\[
R\ll T\exp\phi(q)\left(-V\log\left(\frac{V}{\log\log(qT)}\right)+2V\log\log V+O(V)\right).
\]
\end{prop}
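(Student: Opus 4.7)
The plan is to partition the well-separated $V$-atypical ordinates according to which of the three conditions of Definition~\ref{vtypical} they violate, handle each class separately, and then sum. Each case produces a bound of the desired form, so summing yields the proposition.

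For conditions (ii) and (iii), I would reduce directly to Proposition~\ref{zerocount}. If $t_r^\chi$ fails (ii), then some sub-interval of $(t_r^\chi-1,\,t_r^\chi+1)$ of length $2h$ with $h=\del\pi V/\log(qT)$ contains more than $(1+\del)V$ zeros of $L(\cdot,\chi)$; let $\tilde t_r^\chi$ denote its centre. Since $\frac{h}{\pi}\log\frac{q_1 \tilde t_r^\chi}{2\pi}\le \del V + O(1)$, the excess at $\tilde t_r^\chi$ is $\ge V+O(1)$, satisfying the hypothesis of Proposition~\ref{zerocount}. The $\{\tilde t_r^\chi\}$ need not be well-separated, but because $|\tilde t_r^\chi-t_r^\chi|\le 1$ while the $t_r^\chi$ are at pairwise distance $\ge 1$, a greedy selection extracts a subset of size $\ge R_\chi/C$ on which well-separation holds, for some absolute constant $C$; applying Proposition~\ref{zerocount} to this subset gives the bound for (ii). The argument for (iii) is parallel with $h=\pi V/(\log V\,\log(qT))$, producing excess $\ge V(1-1/\log V)+O(1) =: V'$; since $V'=V-O(V/\log V)$, substituting $V'$ for $V$ in the conclusion of Proposition~\ref{zerocount} perturbs the exponent by only $O(V)$, which is absorbed.

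For condition (i), I would apply the large sieve (Proposition~\ref{sieve}) directly to the Dirichlet polynomial in the definition. If $t_r^\chi$ fails (i), there exists $\sg_r^\chi\ge 1/2$ at which the sum in (i) exceeds $2V$ in modulus. Stripping off the prime-power tail (contributing $O(\log\log(qT))$) and swapping $\chi_1$ for $\chi$ via Lemma~\ref{nonprimerr} (error $O(\sqrt{\log q}/\log\log q)$) introduce only $o(V)$ error, since $V\ge a(T,q)$. Writing $y=(qT)^{1/V}$ and
\[
A(s,\chi)=\sum_{p\le y}\frac{\chi(p)\log(y/p)}{p^s\log y},
\]
one obtains $|A(\sg_r^\chi+it_r^\chi,\chi)|\ge V$. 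With $\eta=1/\log V$ and $k=\lfloor V/(1+\eta)\rfloor$, the inequality $y^k\le qT$ holds, so Proposition~\ref{sieve} with $\al=1/2$ yields
\[
R_{(i)}\,V^{2k}\ll \phi(q)\,T(\log T)^2\,k!\,(\log\log(qT)+O(1))^k.
\]
Manipulating via $k!\le k^k$, $k=V-O(V/\log V)$, and Lemma~\ref{elem1} (exactly as at the end of the proof of Proposition~\ref{zerocount}) gives $R_{(i)}\ll \phi(q)T\exp(-V\log(V/\log\log(qT))+O(V))$, which is dominated by the target.

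The main obstacle I anticipate is the bookkeeping in (ii) and (iii): one must confirm that the constant-factor loss from well-separated extraction, the replacement of $V$ by $V'=V(1-1/\log V)$ in (iii), and the differences between $q$ and $q_1$ and between $\chi$ and $\chi_1$ do not disturb the shape of the bound. Each is routine precisely because the range $a(T,q)\le V\le b(T,q)$ is chosen so that $V/\log V$, $\log\log(qT)$, and $\sqrt{\log q}/\log\log q$ are all $o(V)$, leaving plenty of slack in the $O(V)$ term of the exponent.
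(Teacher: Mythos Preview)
Your proposal is correct and follows essentially the same route as the paper: split according to which criterion of Definition~\ref{vtypical} fails, apply Proposition~\ref{sieve} directly for (i), and reduce to Proposition~\ref{zerocount} for (ii) and (iii). The only differences are cosmetic---the paper simply takes $k\le V$ (since $y^V\le qT$) rather than invoking Lemma~\ref{elem1} in case (i), and achieves well-separation of the shifted points in (ii)/(iii) by partitioning into three arithmetic progressions $\{t_{3s+j}'\}_{j=0,1,2}$ rather than by greedy extraction.
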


\begin{proof}
Let $R_1$ be the number of $t_r^{\chi}$ in the list failing criterion (i). For such $t_r^{\chi}$ there is a $\sg_r^{\chi}\ge\frac12$ such that
\[
\left|\sum_{n\le y}\frac{\chi_1(n)\Lambda(n)}{n^{\sg_r^{\chi}+it}\log n}\frac{\log(y/n)}{\log y}\right|>2V.
\]
The sum over $n=p^{\al}$ with $\al\ge2$ is 
\[
\left|\sum_{p^{\al}\le y,\al\ge2}\frac{\chi_1(p^{\al})}{p^{\sg_r^{\chi}+it}}\frac{\log(y/n)}{\log y}\right|
\le\sum_{p\le\sqrt y}\frac1{p}+\sum_{p^{\al}\le y,\al\ge3}\frac1{p^{\al/2}}
\]
\[
\ll\log\log y\ll\log\log(qT)\le\sqrt{V}.
\]
The error $\frac{\sqrt{\log q}}{\log\log q}$ from subtracting off the contribution from imprimitive characters is a small fraction of $V$ for $qT$ sufficiently large, as argued in the proof of Proposition~\ref{zerocount}, so it suffices to count $t_r^{\chi}$ such that
\[
\left|\sum_{p\le y}\frac{\chi(p)}{p^{\sg_r+it}}\frac{\log(y/p)}{\log y}\right|\ge V.
\]
Since $y^k\le qT$ for all $k\le V$, applying Proposition~\ref{sieve} gives
\[
R_1V^{2k}\le\sum_{\chi}\sum_{r\le R}\left|\sum_{p\le y}\frac{\chi(p)}{p^{\sg_r+it}}\frac{\log(y/p)}{\log y}\right|^{2k}
\ll \phi(q)T(\log T)^2k!\left(\sum_{p\le y}\frac{\log^2(y/p)}{p\log^2y}\right)^k
\]
and we have
\[
\sum_{p\le y}\frac{\log^2(y/p)}{p\log^2y}\le\sum_{p\le y}\frac1p\ll\log\log y\ll\log\log(qT),
\]
so
\[
R_1\ll \phi(q)T\exp\left(-V\log\frac{V}{\log\log(qT)}+O(V)\right).
\]
Next let $R_2$ be the number of $t_r^{\chi}$ in the list failing criterion (ii). For each such $t_r$ there is some $t_r'$ with $|t_r-t_r'|\le1$ and
\[
N\left(t_r'+\frac{\pi\del V}{\log(qT)},\chi\right)-N\left(t_r'-\frac{\pi\del V}{\log(qT)},\chi\right)>(1+\del)V
\]
from which
\[
N\left(t_r'+\frac{\pi\del V}{\log(qT)},\chi\right)-N\left(t_r'-\frac{\pi\del V}{\log(qT)},\chi\right)-\frac{\del V}{\log(qT)}\log\left(\frac{q_1t_r'}{2\pi}\right)\ge V+O(1).
\]
Applying Proposition~\ref{zerocount} on the sets $\{t_{3s+j}^{\chi}\}$ for $j=0,1,2$ gives the desired bound on $R_2$. The computation for $R_3$, the number of $t_r^{\chi}$ failing criterion (iii), is effectively the same.
\end{proof}

We will additionally find it convenient to obtain the following absolute bound on the amount by which $N(t+h,\chi)-N(t-h,\chi)$ can deviate from average. This is an analog of Theorem 1 of~\cite{goldgon}, and is also proven as Propositon 7 of~\cite{halupsu}.

\begin{prop}
\label{goldgonanalog}
Let $qt$ be sufficiently large, $0<h\le\sqrt{t}$, and $\chi$ be a primitive character $\pmod{q}$. Then
\[
\left|N(t+h,\chi)-N(t-h,\chi)-\frac{h}{\pi}\log\frac{qt}{2\pi}\right|\le\frac{\log(qt)}{2\log\log(qt)}+\left(\frac12+o(1)\right)\frac{\log(qt)\log\log\log(qt)}{(\log\log(qt))^2}.
\]
\end{prop}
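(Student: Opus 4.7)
The plan is to apply Lemma~\ref{zerostoprimesum} for both $F_+$ and $F_-$ to obtain upper and lower bounds on the quantity $D:=N(t+h,\chi)-N(t-h,\chi)-\frac{h}{\pi}\log\frac{qt}{2\pi}$, yielding, for any parameter $\Delta\ge 2$,
\[
|D|\le\frac{\log(qt)}{2\pi\Delta}+\frac{1}{\pi}\left|\sum_{p\le e^{2\pi\Delta}}\frac{\chi(p)\log p}{p^{1/2+it}}\hat{F}_{\pm}\!\left(\frac{\log p}{2\pi}\right)\right|+O(\log\Delta).
\]
Since $\chi$ is primitive mod $q$, the conductor in Lemma~\ref{zerostoprimesum} is just $q$. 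It suffices to bound the prime sum trivially, without exploiting any cancellation from the character or from $p^{-it}$.

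The key observation is that by Property~3 of Lemma~\ref{charfnapprox}, $|\hat{F}_{\pm}(x)|\le|\sin(2\pi hx)|/(\pi|x|)+O(1/\Delta)\le 1/(\pi|x|)+O(1/\Delta)$, and at $x=\log p/(2\pi)$ this gives the $h$-independent bound $|\hat{F}_{\pm}(\log p/(2\pi))|\le 2/\log p+O(1/\Delta)$. Setting $y=e^{2\pi\Delta}$, the prime sum is therefore bounded by
\[
\sum_{p\le y}\frac{\log p}{\sqrt{p}}\!\left(\frac{2}{\log p}+O\!\left(\frac{1}{\Delta}\right)\right) = 2\sum_{p\le y}\frac{1}{\sqrt p}+O\!\left(\frac{1}{\Delta}\sum_{p\le y}\frac{\log p}{\sqrt p}\right),
\]
which by the Prime Number Theorem estimates $\sum_{p\le y}1/\sqrt p\sim 2\sqrt y/\log y$ and $\sum_{p\le y}(\log p)/\sqrt p\sim 2\sqrt y$ is $O(e^{\pi\Delta}/\Delta)$.

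It remains to optimize the choice of $\Delta$. Setting $L=\log\log(qt)$ and $L_3=\log\log\log(qt)$, I choose $\Delta=(L-L_3)/\pi$, so that $e^{\pi\Delta}=\log(qt)/L$. Then the prime-sum contribution is $O(\log(qt)/L^2) = o(\log(qt)\,L_3/L^2)$, while
\[
\frac{\log(qt)}{2\pi\Delta}=\frac{\log(qt)}{2(L-L_3)}=\frac{\log(qt)}{2L}+\frac{\log(qt)\,L_3}{2L^2}+O\!\left(\frac{\log(qt)\,L_3^2}{L^3}\right)
\]
by Taylor expansion, matching the claimed bound with secondary constant exactly $1/2$. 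The $O(\log\Delta)=O(\log\log\log(qt))$ term is negligible.

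The main delicate point is to use an $h$-uniform bound on $|\hat F_\pm|$: the alternative pointwise bound $|\hat F_\pm(x)|\le 2h+O(1/\Delta)$ near the origin would contribute an $O(h)$ term from the small primes, which is far too large for $h$ close to $\sqrt t$. Using the sinc bound $1/(\pi|x|)$ instead yields an estimate independent of $h$, after which the correct secondary constant $1/2$ emerges cleanly from the Taylor expansion of $\log(qt)/(2\pi\Delta)$ about $\Delta=L/\pi$.
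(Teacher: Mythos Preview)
Your proof is correct and follows essentially the same route as the paper: apply Lemma~\ref{zerostoprimesum}, bound the prime sum trivially by $O(e^{\pi\Delta}/\Delta)$ via the sinc estimate on $\hat F_\pm$, and choose $\Delta=\frac{1}{\pi}\log\frac{\log(qt)}{\log\log(qt)}=(L-L_3)/\pi$ so that the main term $\frac{\log(qt)}{2\pi\Delta}$ Taylor-expands to the stated bound. Your derivation is in fact slightly more explicit than the paper's in justifying why $(\log p)\,\lvert\hat F_\pm(\log p/2\pi)\rvert\ll 1$ uniformly in $h$, which the paper simply asserts.
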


\begin{proof}
We apply Proposition~\ref{zerostoprimesum} with $\del=\frac1{\pi}\log\frac{\log(qt)}{\log\log(qt)}$. Since
\[
\left|\frac1{\pi}\Re\left(\sum_{p\le e^{2\pi\Del}}\frac{\chi(p)\log p}{p^{1/2+it}}\hat{F}_+\left(\frac{\log p}{2\pi}\right)\right)\right|
\ll\sum_{p\le e^{2\pi\Del}}\frac1{\sqrt p}\ll\frac{e^{\pi\Del}}{\Del},
\]
we can compute
\[
\left|N(t+h,\chi)-N(t-h,\chi)-\frac{h}{\pi}\log\frac{qt}{2\pi}\right|
\]
\[
\le\frac{\log(qt)}{2(\log\log(qt)-\log\log\log(qt))}+O\left(\frac{\log(qt)/\log\log(qt)}{\log\log(qt)-\log\log\log(qt)}\right)
\]
\[
=\frac{\log(qt)}{2\log\log(qt)}\sum_{k=0}^{\infty}\left(\frac{\log\log\log(qt)}{\log\log(qt)}\right)^k+O\left(\frac{\log(qt)}{(\log\log(qt))^2}\right)
\]
\[
=\frac{\log(qt)}{2\log\log(qt)}+\frac{\log(qt)\log\log\log(qt)}{2(\log\log(qt))^2}(1+o(1)).
\]
\end{proof}

The above bound allows us to state, for a given $t$, a choice of $V$ for which $t$ is guaranteed to be $V$-typical.

\begin{prop}
\label{extremev}
For $V$ between $(1/2+o(1))\log(qT)/\log\log(qT)$ and $\log(qT)/\log\log(qT)$, all ordinates $t\in[T,2T]$ are $V$-typical of order $T$.
\end{prop}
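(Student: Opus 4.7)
The plan is to verify the three conditions of Definition~\ref{vtypical} one at a time. Condition (i) will follow from a trivial triangle-inequality estimate on the Dirichlet polynomial combined with the prime number theorem, while conditions (ii) and (iii) will follow directly from Proposition~\ref{goldgonanalog}, whose deviation bound $\frac{\log(qt)}{2\log\log(qt)}(1+o(1))$ sits exactly at the scale of $V$ in this range of parameters. For the ``every sub-interval'' statements in (ii) and (iii), I would apply Proposition~\ref{goldgonanalog} to the midpoint $t' \in (t-1,t+1)$ of each sub-interval, noting that $\log(qt') = \log(qT) + O(1)$ throughout.

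For (i), I would take absolute values, use $|\chi_1(n)|\le 1$ and $|\log(y/n)/\log y|\le 1$, and bound the contributions of prime powers with exponent $\alpha \ge 2$ by $O(\log\log y) = O(\log\log(qT)) = o(V)$. The main term $\sum_{p\le y} p^{-1/2}$ equals $(2+o(1))\sqrt{y}/\log y$ by PNT. With $y = (qT)^{1/V}$ and $V = (1/2+\eta)\log(qT)/\log\log(qT)$, this evaluates to $(2+o(1)) V (\log(qT))^{-2\eta/(1+2\eta)}$, which is at most $2V$ as long as $\eta$ exceeds a quantity of order $1/\log\log(qT)$.

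For (ii) and (iii), since the Euler factors at primes dividing $q$ are zero-free on $\Re s = 1/2$, $L(s,\chi)$ and $L(s,\chi_1)$ share the same critical-line zeros, so Proposition~\ref{goldgonanalog} gives
\[
N(t'+h,\chi) - N(t'-h,\chi) \le \frac{h}{\pi}\log\frac{qt'}{2\pi} + \frac{\log(qT)}{2\log\log(qT)}\left(1 + O\!\left(\frac{\log\log\log(qT)}{\log\log(qT)}\right)\right).
\]
For (ii), set $h = \delta\pi V/\log(qT)$: the expected count is $\delta V(1+o(1))$, the deviation bound equals $V(1+2\eta)^{-1}(1+o(1))$, and summing yields at most $(1+\delta)V$ for any $\eta \ge 0$ once the $o(1)$ terms are absorbed. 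For (iii), set $h = \pi V/(\log V\log(qT))$: the expected count is $V/\log V + o(V)$, so the total is $V\bigl(1/\log V + (1+2\eta)^{-1}\bigr) + o(V)$, at most $V$ provided $2\eta/(1+2\eta) \ge 1/\log V + o(1)$; since $\log V \sim \log\log(qT)$, this requires $\eta \gg 1/\log\log(qT)$.

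The main (and only substantive) obstacle is bookkeeping: one must track the ``$o(1)$'' slack $\eta$ above $\tfrac12$ carefully so that it simultaneously dominates the secondary error $\log\log\log(qT)/\log\log(qT)$ from Proposition~\ref{goldgonanalog} and the $1/\log\log(qT)$-sized shortfall in (iii). Any $\eta$ of order $\log\log\log(qT)/\log\log(qT)$ works for all three conditions at once, which is precisely what the ``$(1/2+o(1))$'' in the statement encodes.
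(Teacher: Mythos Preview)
Your proposal is correct and follows essentially the same approach as the paper: a crude upper bound on the Dirichlet polynomial for condition (i), and a direct appeal to Proposition~\ref{goldgonanalog} for conditions (ii) and (iii). The only difference is that in (i) the paper keeps the smoothing weight $\log(y/n)/\log y$ and uses the partial-summation identity $\sum_{n\le y}\frac{\Lambda(n)\chi_1(n)}{\sqrt n\,\log n}\log(y/n)=\int_1^y f(u)\,\frac{du}{u}$ with $f(u)\ll\sqrt u/\log u$ to gain an extra factor of $\log y$, obtaining the stronger estimate $o(V)$ rather than your just-sufficient $\le 2V$; both are enough.
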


\begin{proof}
We have
\[
f(u)=\sum_{n\le u}\frac{\Lam(n)\chi_1(n)}{\sqrt{n}\log n}\ll\frac{\sqrt{u}}{\log u}
\]
and consequently
\[
\sum_{n\le y}\frac{\Lam(n)\chi_1(n)}{\sqrt{n}\log n}\log\left(\frac yn\right)=\sum_{n\le y}\frac{\Lam(n)\chi_1(n)}{\sqrt{n}\log n}\sum_{n\le j\le y}\log\left(\frac{j+1}j\right)
\]
\[
=\sum_{j\le y}\sum_{n\le j}\frac{\Lam(n)\chi_1(n)}{\sqrt{n}\log n}\log\left(\frac{j+1}j\right)=\sum_{j\le y}\int_j^{j+1}\sum_{n\le j}\frac{\Lam(n)\chi_1(n)}{\sqrt{n}\log n}\frac{du}{u}
\]
\[
=\int_1^yf(u)\frac{du}u\ll\frac{\sqrt{y}}{\log y}.
\]
We conclude that
\[
\left|\sum_{n\le y}\frac{\chi_1(n)\Lambda(n)}{n^{\sg_r+it}\log n}\frac{\log(y/n)}{\log n}\right|\ll\frac{\sqrt{y}}{(\log y)^2}=\frac{V^2(qT)^{1/V}}{(\log(qT))^2}\le\frac{\log(qT)}{(\log\log(qT))^2}=o(V).
\]
The other two follow directly from Proposition~\ref{goldgonanalog}.
\end{proof}

\section{$L(s,\chi)$ bounds}
\label{Lfuncbds}

Our next goal is to find lower bounds for $L(s,\chi)$ given that $s$ has $V$-typical imaginary part, which we will obtain from bounds for the logarithmic derivative $\frac{L'}{L}(s,\chi)$. Let $F(s,\chi)=\sum_{\rho}\Re\left(\frac{1}{s-\rho}\right)$ where $\rho$ runs over the nontrivial zeroes of $L(s,\chi)$. We can write $L'/L$ in terms of $F$.

\begin{prop}
\label{logderinF}
Let $\chi$ be a primitive character $\pmod{q}$, $T$ be sufficiently large, $\frac12\le\sg\le2$, $T\le t\le 2T$, and $\sg$ be such that $L(\sg+it,\chi)\neq0$. Then
\[
\Re\frac{L'}{L}(\sg+it,\chi)=F(\sg+it,\chi)-\frac12\log(qT)+O(1).
\]
\end{prop}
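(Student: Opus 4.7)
The plan is to derive the identity from the Hadamard product representation of $L(s,\chi)$ and then apply Stirling's approximation to the resulting gamma-factor term.

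First I would start from the partial fraction expansion for $L'/L$ that comes from the Hadamard factorization of a primitive Dirichlet $L$-function. For $\chi$ primitive modulo $q$ with parameter $\kap\in\{0,1\}$, we have the completed $L$-function $\xi(s,\chi)=(q/\pi)^{(s+\kap)/2}\Gam((s+\kap)/2)L(s,\chi)$, which is entire of order one; taking its logarithmic derivative and applying the standard Hadamard product yields
\[
\frac{L'}{L}(s,\chi)=B(\chi)+\sum_{\rho}\left(\frac{1}{s-\rho}+\frac{1}{\rho}\right)-\frac12\log\frac{q}{\pi}-\frac12\frac{\Gam'}{\Gam}\left(\frac{s+\kap}2\right),
\]
where $B(\chi)$ is the Hadamard constant. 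I would then invoke the well-known identity $\Re B(\chi)=-\sum_{\rho}\Re(1/\rho)$ (a consequence of the functional equation together with grouping the symmetric zeros $\rho$ and $1-\bar\rho$), which makes the $1/\rho$ terms cancel against $\Re B(\chi)$ when we take real parts.

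Taking $\Re$ of both sides thus gives
\[
\Re\frac{L'}{L}(\sg+it,\chi)=F(\sg+it,\chi)-\frac12\log\frac{q}{\pi}-\frac12\Re\frac{\Gam'}{\Gam}\left(\frac{\sg+\kap+it}2\right).
\]
The next step is to apply Stirling's formula for $\Gam'/\Gam$. For $\sg\in[1/2,2]$, $\kap\in\{0,1\}$, and $t\in[T,2T]$ with $T$ large, the argument $(\sg+\kap+it)/2$ has imaginary part of size $t/2\asymp T$, so Stirling gives
\[
\Re\frac{\Gam'}{\Gam}\left(\frac{\sg+\kap+it}2\right)=\log\frac{t}{2}+O(1)=\log\frac{T}{2}+O(1),
\]
the last equality because $t\in[T,2T]$. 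Substituting and combining logarithms yields
\[
\Re\frac{L'}{L}(\sg+it,\chi)=F(\sg+it,\chi)-\frac12\log\frac{qT}{2\pi}+O(1)=F(\sg+it,\chi)-\frac12\log(qT)+O(1),
\]
as required.

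There is no real obstacle here; the argument is entirely routine and the only thing to verify is the cancellation between $\Re B(\chi)$ and $\sum_{\rho}\Re(1/\rho)$, together with the applicability of Stirling's approximation in the specified range. The restriction $L(\sg+it,\chi)\ne0$ ensures that the left-hand side and the series $F(\sg+it,\chi)$ are both finite at the point in question, so no further care is needed in handling the partial fraction expansion.
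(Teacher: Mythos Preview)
Your proposal is correct and follows essentially the same approach as the paper: both start from the Hadamard partial fraction expansion for $L'/L$ (the paper cites it as Equation~12.17 of Davenport), use the identity $\Re B(\chi)=-\sum_{\rho}\Re(1/\rho)$ to cancel the $1/\rho$ terms upon taking real parts, and then apply Stirling's formula to the $\Gam'/\Gam$ term.
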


\begin{proof}
We start with Equation 12.17 of~\cite{davenport}, which states that for primitive $\chi$,
\[
\frac{L'}{L}(s,\chi)=-\frac12\log\frac q{\pi}-\frac12\frac{\Gam'}{\Gam}\left(\frac{s+\kap}2\right)+B(\chi)+\sum_{\rho}\left(\frac1{s-\rho}+\frac1{\rho}\right)
\]
where $\Re(B(\chi))=-\sum_{\rho}\Re(1/\rho)$. From Stirling's formula for $\Gam'/\Gam$, we have
\begin{align*}
\Re\frac{L'}{L}(s,\chi) &=-\frac12\log\frac q{\pi}-\frac12\Re\frac{\Gam'}{\Gam}\left(\frac{\sg+it+\kap}2\right)+\Re(B(\chi))+\sum_{\rho}\Re\left(\frac1{\sg+it-\rho}+\frac1{\rho}\right) \\
&=-\frac12\log\frac q{\pi}-\frac12\log|\sg+it+\kap|+O(|\sg+it+\kap|^{-1})+F(\sg+it,\chi) \\
&=F(\sg+it,\chi)-\frac12\log(qT)+O(1).
\end{align*}
\end{proof}

We also need $L'/L$ in terms of a sum over primes together with another sum over zeros of $L$.

\begin{prop}
\label{eqnwithz}
Let $\chi$ be a primitive character $\pmod{q}$ and $y\ge1$. Let $z\in\cplx$ be such that $\Re(z)\ge0$ and $T\le\Im(z)\le2T$, and assume that $z$ is not a pole of $\frac{L'}{L}(s,\chi)$. Then
\[
\sum_{n\le y}\frac{\chi(n)\Lam(n)}{n^z}\log\left(\frac yn\right)=-\frac{L'}{L}(z,\chi)\log y-\left(\frac{L'}{L}\right)'(s,\chi)-\sum_{\rho}\frac{y^{\rho-z}}{(\rho-z)^2}+O(T^{-1}).
\]
\end{prop}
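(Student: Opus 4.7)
The plan is to realize the smoothed sum as a Mellin transform and shift a contour. Starting from the standard identity $\frac{1}{2\pi i}\int_{(c)} \frac{x^s}{s^2}\,ds = \max(\log x, 0)$ for $x > 0$ and $c > 0$, I would apply it with $x = y/n$ on a line $\Re(s) = c > 1 - \Re(z)$, so that the Dirichlet series $-\frac{L'}{L}(z+s,\chi)=\sum_n \chi(n)\Lam(n)n^{-z-s}$ converges absolutely there. Interchanging sum and integral yields
\[
\sum_{n \le y}\frac{\chi(n)\Lam(n)}{n^z}\log\left(\frac{y}{n}\right) = -\frac{1}{2\pi i}\int_{(c)}\frac{y^s}{s^2}\frac{L'}{L}(z+s,\chi)\,ds.
\]

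Next I would shift the contour leftward to $\Re(s) = -U$ for large $U$, picking up residues at the poles of the integrand. Taylor-expanding $y^s$ and $\frac{L'}{L}(z+s,\chi)$ around $s = 0$ shows that $s=0$ is a double pole with residue $\log y\cdot\frac{L'}{L}(z,\chi)+(L'/L)'(z,\chi)$. There are simple poles at $s=\rho-z$ for each nontrivial zero $\rho$ of $L(s,\chi)$, each with residue $y^{\rho-z}/(\rho-z)^2$, and simple poles at $s=-\kap-2k-z$ for $k\ge 0$ coming from the trivial zeros, each with residue $y^{-\kap-2k-z}/(-\kap-2k-z)^2$. Negating the first two families of residues reproduces exactly the three main terms on the right-hand side of the proposition, so the task reduces to showing that the trivial-zero sum and the shifted contour integral together contribute $O(T^{-1})$.

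The trivial-zero sum is at most $\sum_{k\ge 0} y^{-\kap-2k-\Re(z)}/|\kap+2k+z|^2 \ll T^{-2}$, since each denominator exceeds $|\Im(z)|\asymp T$ and $y\ge 1$ makes the remaining geometric series bounded. The vertical integral on $\Re(s)=-U$ carries the decay $y^{-U}$, while $L'/L(z+s,\chi)$ grows only logarithmically there (via the functional equation together with Stirling's formula applied to the $\Gam$-factors), so it vanishes as $U\to\infty$. To close the contour one introduces horizontal segments at heights $\pm U'$ placed at distance $\gg 1/\log(q(T+U'))$ from all nontrivial zeros of $L(s,\chi)$; on these segments $|L'/L|\ll \log^2(q(T+U'))$ and the $1/s^2$ factor forces the contribution to vanish as $U'\to\infty$. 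The main obstacle is this last bound: one must choose the heights $U'$ carefully using the zero-counting function $N(t,\chi)$ to guarantee that admissible heights exist arbitrarily far up. Once this is done, the $1/s^2$ damping combined with the logarithmic bound on $L'/L$ gives a negligible horizontal contribution, leaving only the $O(T^{-2})$ from the trivial zeros, which is well within the claimed $O(T^{-1})$.
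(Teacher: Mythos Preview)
Your approach is essentially identical to the paper's: both express the smoothed sum as $-\frac{1}{2\pi i}\int_{(c)}\frac{L'}{L}(z+w,\chi)\frac{y^w}{w^2}\,dw$, shift the contour far to the left (the paper uses the specific abscissae $\Re(w)=-2N-1-\kap-\Re(z)$ to stay between trivial zeros), collect the residues at $w=0$, at $w=\rho-z$, and at $w=-2k-\kap-z$, and then bound the trivial-zero sum.

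One small slip to fix: your claimed bound $\ll T^{-2}$ for the trivial-zero sum is not correct uniformly in $y\ge1$, because when $y$ is close to $1$ the factors $y^{-2k}$ do not form a uniformly bounded geometric series. You must also use the growth of the real part in the denominator: since $|z+2k+\kap|^2\ge (2k+\kap)^2+T^2$, one gets
\[
\sum_{k\ge0}\frac{y^{-2k-\kap-\Re(z)}}{|z+2k+\kap|^2}\le\sum_{k\ge0}\frac{1}{(2k+\kap)^2+T^2}\ll\frac1T,
\]
which is exactly the $O(T^{-1})$ the proposition asserts (and is what the paper obtains).
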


\begin{rem}
This is Proposition 13 of~\cite{halupsu}, with the condition $y\le T$ removed.
\end{rem}
%(Note: only difference from H and S is removal of the condition $y\le T$)

\begin{proof}
Proposition 12 in~\cite{halupsu} states that for $\Re(z)>1/2$,
\[
\sum_{n\le y}\frac{\chi(n)\Lam(n)}{n^z}\log\left(\frac yn\right)=-\frac{L'}{L}(z,\chi)\log y-\left(\frac{L'}{L}\right)'(z,\chi)-\sum_{\rho}\frac{y^{\rho-z}}{(\rho-z)^2}-\sum_{n\ge0}\frac{y^{-2n-\kap-z}}{(z+2n+\kap)^2}.
\]
The proof is as follows. We begin with the identity
\[
\frac1{2\pi i}\int_{c-i\infty}^{c+i\infty}-\frac{L'}{L}(z+w,\chi)\frac{y^w}{w^2}dw=\sum_{n\le y}\frac{\Lam(n)\chi(n)}{n^z}\log\frac yn,
\]
which is valid for $\Re(z)>1/2$ and $c>1/2$, and can be obtained by expanding $-\frac{L'}{L}(z+w,\chi)$ as a Dirichlet series and integrating term by term. Then we shift the line of integration to $\Re(w)=-2N-1-\kap-\Re(z)$ where $N$ is a positive integer, picking up the residue $-\frac{L'}{L}(z,\chi)\log y-\left(\frac{L'}{L}\right)'(z,\chi)$ from the pole at $0$ and the sums $-\sum_{\rho}\frac{y^{\rho-z}}{(\rho-z)^2}$ and $-\sum_{0\le n\le2N+\kap}\frac{y^{-2n-\kap-z}}{(z+2n+\kap)^2}$ from the nontrivial and trivial zeroes of $L$ respectively. Since\\ $\frac{L'}{L}(s,\chi)\ll\log(q|s|)$ for $\Re(s)\le-1$ excluding circles of radius $1/2$ around the trivial zeroes (see page 116 of~\cite{davenport}), the integral $\int_{c'-i\infty}^{c'+i\infty}-\frac{L'}{L}(z+w,\chi)\frac{y^w}{w^2}dw$ approaches $0$ as $N$ approaches infinity, giving the identity above.

Finally we note that since $\Re(-2n-\kap-z)\le0$ for $n\ge0,\kap\in\{0,1\}$, and $\Re(z)\ge0$, it is clear that
\[
\left|\sum_{n\ge0}\frac{y^{-2n-\kap-z}}{(z+2n+\kap)^2}\right|\le\sum_{n\ge0}\frac1{(T+2n+\kap)^2}\ll\frac1T.
\]
%by the proof of B and D Proposition 4. The point is that the only part of the error term that gets larger as $y$ gets larger is the one from the pole at $1$, which doesn't eyist for $q>1$.
\end{proof}

From the two previous propositions, we can write an explicit expression for a lower bound for $\log|L(\sg+it,\chi)|$.

\begin{prop}
\label{initineq}
Let $\chi$ be a  character $\pmod{q}$ induced by $\chi_1\pmod{q_1}$. Let $T$ be sufficiently large and $t\in[T,2T]$. Then we have for all $\frac12\le\sg\le2$ and $y\ge2$ that
\[
\log|L(\sg+it,\chi)|\ge\Re\left(\sum_{n\le y}\frac{\Lam(n)\chi_1(n)}{n^{\sg+it}\log n}\frac{\log(y/n)}{\log y}\right)-\left(1+\frac{y^{1/2-\sg}}{(\sg-1/2)\log y}\right)\frac{F(\sg+it,\chi)}{\log y}
\]
\[
+O\left(\frac{\sqrt{\log q}}{\log\log q}\right).
\]
\end{prop}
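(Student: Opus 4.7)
The plan is to start from Proposition~\ref{eqnwithz} applied to the primitive character $\chi_1\pmod{q_1}$ and convert its weighted Dirichlet polynomial into the desired one by integrating the identity with respect to $z$ along the horizontal ray from $\sg+it$ to $+\infty+it$, then dividing by $\log y$. Since $\int_\sg^\infty n^{-u-it}\,du = n^{-\sg-it}/\log n$, the left-hand side becomes exactly $\sum_{n\le y}\Lam(n)\chi_1(n)\log(y/n)/(n^{\sg+it}\log n\log y)$. On the right-hand side, the integrals of the $-\frac{L'}{L}\log y$ and $-(\frac{L'}{L})'$ terms evaluate to $\log y\cdot\log L(\sg+it,\chi_1)$ and $\frac{L'}{L}(\sg+it,\chi_1)$ respectively (using that $\log L$ and $L'/L$ tend to $0$ as $\Re(z)\to\infty$), and the $O(T^{-1})$ remainder from Proposition~\ref{eqnwithz}, which in fact decays like $y^{-u}/T$, integrates to something of size $O(y^{-\sg}/(T\log y))$.

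After dividing by $\log y$, taking real parts, and invoking Proposition~\ref{logderinF} to replace $\Re\frac{L'}{L}(\sg+it,\chi_1)$ by $F(\sg+it,\chi_1)-\tfrac12\log(q_1T)+O(1)$, I obtain
\[
\log|L(\sg+it,\chi_1)| = \Re\!\left(\sum_{n\le y}\frac{\Lam(n)\chi_1(n)\log(y/n)}{n^{\sg+it}\log n\,\log y}\right) - \frac{F(\sg+it,\chi_1)}{\log y} + \frac{\log(q_1T)}{2\log y} + \mathcal{E} + O\!\left(\tfrac{1}{\log y}\right),
\]
where $\mathcal{E}=\tfrac{1}{\log y}\Re\int_{\sg}^{\infty}\sum_{\rho}\frac{y^{\rho-u-it}}{(\rho-u-it)^2}\,du$. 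The nonnegative term $\log(q_1T)/(2\log y)$ will be dropped to produce the desired lower bound, and I expect the main technical step to be the estimation of $|\mathcal{E}|$.

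For that estimate, under GRH we have $\rho = 1/2+i\gam$, so $|u+it-\rho|^2=(u-1/2)^2+(\gam-t)^2$ is nondecreasing in $u$ on $[1/2,\infty)$. Hence for $u\ge\sg\ge 1/2$ we have $|u+it-\rho|^{-2}\le|\sg+it-\rho|^{-2}$, and
\[
\int_\sg^\infty\frac{y^{1/2-u}}{|u+it-\rho|^2}\,du \le \frac{1}{|\sg+it-\rho|^2}\int_\sg^\infty y^{1/2-u}\,du = \frac{y^{1/2-\sg}}{|\sg+it-\rho|^2\log y}.
\]
Summing over $\rho$ and using the identity $\sum_\rho|\sg+it-\rho|^{-2}=F(\sg+it,\chi_1)/(\sg-1/2)$ (factor $\sg-1/2$ out of each summand of $F$) yields $|\mathcal{E}|\le y^{1/2-\sg}F(\sg+it,\chi_1)/((\sg-1/2)(\log y)^2)$. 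Combining this bound with the $-F(\sg+it,\chi_1)/\log y$ term produces exactly the factor $(1+y^{1/2-\sg}/((\sg-1/2)\log y))F(\sg+it,\chi_1)/\log y$ in the statement.

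To finish, I apply Lemma~\ref{nonprimerr}(2) to replace $\log|L(\sg+it,\chi_1)|$ by $\log|L(\sg+it,\chi)|$ at cost $O(\sqrt{\log q}/\log\log q)$, and observe that $F(\sg+it,\chi)=F(\sg+it,\chi_1)$ because $\chi$ and $\chi_1$ share all nontrivial zeros (the imprimitive factor $l_q(s,\chi_1)$ contributes only zeros on $\Re(s)=0$). The residual $O(1/\log y)$ error from Proposition~\ref{logderinF} is absorbed into the displayed error term.
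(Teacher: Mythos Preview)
Your proof is correct and follows essentially the same approach as the paper: integrate the identity of Proposition~\ref{eqnwithz} in the horizontal direction, divide by $\log y$, take real parts, replace $\Re\frac{L'}{L}$ via Proposition~\ref{logderinF}, bound the zero-sum integral using $|\rho-u-it|\ge|\rho-\sg-it|$ together with $\sum_\rho|\sg+it-\rho|^{-2}=(\sg-1/2)^{-1}F(\sg+it,\chi_1)$, and finally pass from $\chi_1$ to $\chi$ via Lemma~\ref{nonprimerr}. The only difference is that the paper integrates from $\sg+it$ to $2+it$ (absorbing the $O(1)$ boundary terms at $2$) whereas you integrate to $+\infty+it$ (making those boundary terms vanish), which is an inessential variation.
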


\begin{proof}
First assume $\chi$ is primitive. We integrate the equation from Proposition~\ref{eqnwithz} from $z=\sg+it$ to $z=2+it$. This gives
\[
\sum_{n\le y}\Lam(n)\chi(n)\log\frac yn\left(\frac{n^{-2-it}}{-\log n}-\frac{n^{-\sg-it}}{-\log n}\right)
\]
\[
=(-\log L(2+it,\chi)+\log L(\sg+it,\chi))\log y-\frac{L'}{L}(2+it,\chi)+\frac{L'}{L}(\sg+it,\chi)
\]
\[
-\sum_{\rho}\int_{\sg}^2\frac{y^{\rho-u-it}}{(\rho-u-it)^2}du+O(T^{-1})
\]
or, rearranging and absorbing small terms into the error term,
\[
(\log y)\log L(\sg+it,\chi)=\sum_{n\le y}\frac{\Lam(n)\chi(n)}{n^{\sg+it\log n}}\log\frac yn-\frac{L'}{L}(\sg+it,\chi)+\sum_{\rho}\int_{\sg}^2\frac{y^{\rho-u-it}}{(\rho-u-it)^2}du+O(\log y).
\]
We divide by $\log y$, take real parts, and use Proposition~\ref{logderinF} to turn this into
\[
\log|L(\sg+it,\chi)|=\Re\sum_{n\le y}\frac{\Lam(n)\chi(n)}{n^{\sg+it}\log n}\frac{\log(y/n)}{\log y}-\frac{F(\sg+it)}{\log y}+\frac{\log(qT)}{2\log y}
\]
\[
+\frac1{\log y}\Re\sum_{\rho}\int_{\sg}^2\frac{y^{\rho-u-it}}{(\rho-u-it)^2}du+O(1)
\]
where
\[
\left|\sum_{\rho}\int_{\sg}^2\frac{y^{\rho-u-it}}{(\rho-u-it)^2}du\right| \le\sum_{\rho}\frac1{|\rho-\sg-it|^2}\int_{\sg}^2y^{1/2-u}du
\]
\[
\le\frac{y^{1/2-\sg}}{\log y}\sum_{\rho}\frac1{|\rho-\sg-it|^2}=\frac{y^{1/2-\sg}}{(\sg-1/2)\log y}F(\sg+it,\chi).
\]
This gives the inequality for $\chi$ primitive. Otherwise we have
\[
\log|L(s,\chi)|
=\log|L(s,\chi_1)|+O\left(\frac{\sqrt{\log q}}{\log\log q}\right)
\]
by Lemma~\ref{nonprimerr}.
\end{proof}

Now we assume that $t$ is $V$-typical and get two explicit bounds for $\log|L(\sg+it,\chi)|$, depending on the size of $\sg$. The first is for $\sg$ not too close to $1/2$, and is like Proposition 15 of~\cite{halupsu}, with the condition $T\ge q$ removed.

\begin{prop}
\label{largesglogbound}
Let $\chi$ be a  character $\pmod{q}$. Let $T$ be sufficiently large, $a(T,q)\le V\le b(T,q)$, and $t\in[T,2T]$ be $(V,\del,\chi)$-typical of order $T$. Then for $\sg$ such that $\frac12+\frac{V}{\log(qT)}\le\sg\le2$ and some $C>0$, we have
\[
\log|L(\sg+it,\chi)|\ge-C\left(\frac{V}{\del}+\sqrt{\frac{\log q}{\log\log q}}\right).
\]
\end{prop}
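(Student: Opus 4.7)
The plan is to apply Proposition~\ref{initineq} with $y = (qT)^{1/V}$, matching the scale appearing in criterion~(i) of Definition~\ref{vtypical}, so that $\log y = \log(qT)/V$. With this choice, three of the four ingredients in Proposition~\ref{initineq} are immediately under control: criterion~(i) of $V$-typicality bounds the Dirichlet polynomial by $2V$ in absolute value; the multiplier $1 + y^{1/2-\sg}/((\sg-1/2)\log y)$ is $O(1)$ since the restriction $\sg - 1/2 \ge V/\log(qT)$ forces $(\sg-1/2)\log y \ge 1$ and $y^{1/2-\sg} \le 1$ for $\sg \ge 1/2$; and the $O(\sqrt{\log q}/\log\log q)$ imprimitive-character error is already of the required form $O(\sqrt{\log q/\log\log q})$. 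Everything therefore reduces to showing $F(\sg+it,\chi) \ll \log(qT)/\del$, which upon division by $\log y$ yields the desired $O(V/\del)$ contribution.

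To bound $F$, set $h_0 = \sg - 1/2 \ge V/\log(qT)$ and write $F = \sum_{\rho} h_0/(h_0^2 + (t-\gam)^2)$. The crucial intermediate estimate, which I would obtain by covering $(t-x, t+x)$ with at most $\lceil x\log(qT)/(\del\pi V)\rceil$ translates of an interval of length $2\del\pi V/\log(qT)$ and invoking criterion~(ii) of $V$-typicality on each, is
\[
N_t(x) := \#\{\rho : |t-\gam| \le x\} \ll \frac{x\log(qT)}{\del} + V \qquad (0 < x \le 1).
\]
I would then split the sum over $\rho$ into three ranges. For $|t-\gam| \le h_0$, each term is at most $1/h_0$, so the contribution is $N_t(h_0)/h_0 \ll \log(qT)/\del + V/h_0 \ll \log(qT)/\del$, where the last step uses $h_0 \ge V/\log(qT)$. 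For $h_0 < |t-\gam| \le 1$, a dyadic decomposition into shells $(2^j h_0, 2^{j+1} h_0]$ combined with the $N_t$ estimate gives a shell contribution of $O((2^{j+1} h_0 \log(qT)/\del + V)/(2^{2j} h_0))$, which is a geometric series in $j$ totaling $O(\log(qT)/\del + V/h_0) = O(\log(qT)/\del)$. For $|t-\gam| > 1$, standard global density yields $O(h_0\log(qT)) = O(\log(qT))$ by dyadic summation. Combining the three pieces, $F \ll \log(qT)/\del$, as desired.

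The main obstacle is extracting the sharp $1/\del$ dependence in $N_t(x)$; once the tiling argument converts criterion~(ii) into the stated density bound, the remaining dyadic bookkeeping in the three ranges is routine. Criterion~(iii) of $V$-typicality plays no role in this regime, and its utility will presumably surface in a complementary proposition handling $\sg$ near $1/2$, where $h_0$ can be smaller than the length scale in criterion~(ii) and a sharper density bound is required.
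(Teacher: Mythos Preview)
Your proposal is correct and follows essentially the same approach as the paper: apply Proposition~\ref{initineq} with $y=(qT)^{1/V}$, observe that the multiplier $1+y^{1/2-\sg}/((\sg-1/2)\log y)$ is $O(1)$ in this range, and reduce to showing $F(\sg+it,\chi)\ll\log(qT)/\del$ via criterion~(ii) for the zeros with $|t-\gam|\le 1$ together with the standard bound $\sum_\rho 1/(1+(t-\gam)^2)\ll\log(qt)$ for the far zeros. The only cosmetic difference is in the bookkeeping for $F$: the paper partitions $|t-\gam|\le 1/2$ into fixed-width shells $I_n$ of length exactly $2\pi\del V/\log(qT)$ and evaluates $\sum_n a/(a^2+c^2n^2)$ directly, whereas you first extract a density estimate $N_t(x)\ll x\log(qT)/\del + V$ and then do a dyadic decomposition; both routes are standard and yield the same $\log(qT)/\del$ bound.
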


\begin{proof}
For $y=(qT)^{1/V}$, we have
\[
\frac{y^{1/2-\sg}}{(\sg-1/2)\log y}\le\frac{\exp\left(-\frac{V}{\log(qT)}\cdot\frac{\log qT}{V}\right)}{\frac{V}{\log(qT)}\frac{\log qT}{V}}
=\frac{\exp(-1)}{1}<1
\]
so by Proposition~\ref{initineq}, given that $t$ is $(V,\del,\chi)$-typical, we have
\[
\log|L(\sg+it,\chi)|\ge-2V-2\frac{V}{\log(qT)}F(\sg+it,\chi)+O\left(\sqrt{\frac{\log q}{\log\log q}}\right).
\]
The proof of Proposition 15 in~\cite{halupsu} states that $F(\sg+it,\chi)=O\left(\frac{\log qT}{\del}\right)$; we reproduce this computation here. For %$q_1$ the conductor of $\chi$ and 
$0\le n\le N=\left\lfloor\frac{\log(qT)}{4\pi\del V}\right\rfloor$, let $I_n$ be the set of zeros $\rho=1/2+i\gam$ such that
\[
\frac{2\pi n\del V}{\log(qT)}\le|t-\gam|\le\frac{2\pi(n+1)\del V}{\log(qT)}.
\]
Note that $\sum_{n=0}^N\frac{a}{a^2+c^2n^2}\le\frac1a+\int_0^{\infty}\frac{a}{a^2+c^2t^2}dt=\frac1a+\frac{\pi}{2c}$. Then since $t$ is $V$-typical, we use Property 2 of Definition~\ref{vtypical} to get
\[
\sum_{\gam\in I_n}\Re\left(\frac1{\sg+it-1/2-i\gam}\right) = \sum_{\gam\in I_n}\frac{\sg-1/2}{(\sg-1/2)^2+(t-\gam)^2}
\]
\[
\le2(1+\del)V\sum_{n=0}^N\frac{\sg-1/2}{(\sg-1/2)^2+\left(\frac{2\pi n\del V}{\log(qT)}\right)^2} \le 2(1+\del)V\left(\frac1{\sg-1/2}+\frac{\log(qT)}{4\del V}\right)
\]
\[
\le2(1+\del)V\left(\frac{\log(qT)}V+\frac{\log(qT)}{4\del V}\right)=O(\log(qT)/\del).
\]
For the remaining zeros, we note that if $|t-\gam|\ge1/2$ and $1/2\le\sg\le2$ then
\[
\frac{(\sg-1/2)^2+(t-\gam)^2}{1+(t-\gam)^2}\ge\frac15\ge\frac{\sg-1/2}{8}
\]
so citing Equation 16.3 of~\cite{davenport}, we have
\[
\sum_{|t-\gam|\ge1/2}\Re\left(\frac1{\sg+it-1/2-i\gam}\right) = \sum_{|t-\gam|\ge1/2}\frac{\sg-1/2}{(\sg-1/2)^2+(t-\gam)^2}
\]
\[
\le\sum_{|t-\gam|\ge1/2}\frac{8}{1+(t-\gam)^2}\le\sum_{\rho}\frac{8}{1+(t-\Im(\rho))^2}\ll\log(qt).
\]
Combining these gives $F(\sg+it,\chi)=O\left(\frac{\log qT}{\del}\right)$ as desired, so the middle term $-2\frac{V}{\log(qT)}F(\sg+it,\chi)$ is $O(V/\del)$, and we are done.
%From unchanged H and S Proposition 14, by the proof of H and S Proposition 15. The bound for $F(\sg+it,\chi)$ given is $\frac{\log qT}{\del}$, and we are dividing by $\log y=\frac{\log qT}{V}$, giving the main term $V/\del$. The term $\frac{y^{1/2-\sg}}{(\sg-1/2)\log y}$ remains trivially small. 
\end{proof}

If $\sg$ is very close to $1/2$, we instead compute the following.

\begin{prop}
\label{smallsglogbound}
Let $\chi$ be a character $\pmod{q}$, $T$ be sufficiently large, $a(T,q)\le V\le b(T,q)$, and $t\in[T,2T]$ be $V$-typical of order $T$. Then we have for all $\frac12<\sg\le\sg_0=\frac12+\frac{V}{\log(qT)}$ that
\[
\log|L(\sg+it,\chi)|\ge\log|L(\sg_0+it,\chi)|-V\log\frac{\sg_0-1/2}{\sg-1/2}-2(1+\del)V\log\log V+O\left(\frac{V}{\del^2}+\frac{\sqrt{\log q}}{\log\log q}\right).
\]
\end{prop}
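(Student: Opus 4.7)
The plan is to integrate $\frac{d}{du}\log|L(u+it,\chi)|=\Re\frac{L'}{L}(u+it,\chi)$ from $u=\sg$ up to $u=\sg_0$, so that
\[
\log|L(\sg+it,\chi)|=\log|L(\sg_0+it,\chi)|-\int_\sg^{\sg_0}\Re\frac{L'}{L}(u+it,\chi)\,du.
\]
Reduce to the primitive character $\chi_1$ inducing $\chi$ using Lemma~\ref{nonprimerr}(2), which contributes the stated $O(\sqrt{\log q}/\log\log q)$ error, and apply Proposition~\ref{logderinF} to rewrite the integrand as $F(u+it,\chi_1)-\frac12\log(q_1T)+O(1)$. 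The $-\frac12\log(q_1T)$ piece contributes an absorbable $O(V)$ across $[\sg,\sg_0]$ (which has length at most $V/\log(qT)$), so after swapping integration and summation it suffices to bound
\[
\sum_\rho\frac12\log\frac{(\sg_0-1/2)^2+(t-\gam)^2}{(\sg-1/2)^2+(t-\gam)^2}
\]
from above by $V\log((\sg_0-1/2)/(\sg-1/2))+2(1+\del)V\log\log V+O(V/\del^2)$.

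Set $\al=\sg_0-1/2$, $\bt=\sg-1/2$, $f(h)=\frac12\log((\al^2+h^2)/(\bt^2+h^2))$, and $\al_3=\pi V/(\log V\log(qT))$. Writing $N_t(h):=N(t+h,\chi_1)-N(t-h,\chi_1)$, Abel summation identifies the sum with $\int_0^\infty(-f'(h))N_t(h)\,dh$, which I would split at $\al_3$, $1$, and $\infty$. On $[0,\al_3]$, $V$-typicality Property (iii) gives $N_t(h)\le V$ (crucially $V$ and not $(1+\del)V$), so this range contributes at most $Vf(\al_3)+V(f(0)-f(\al_3))=V\log(\al/\bt)$, delivering the leading term with the exact required coefficient. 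On $[\al_3,1]$, Property (ii) gives $N_t(h)\le(1+\del)V\lceil h/(\pi\del\al)\rceil$; the constant summand contributes $(1+\del)V(f(\al_3)-f(1))$, and a short case analysis on whether $\bt\le\al_3$ (in which case $f(\al_3)\approx\log\log V$) or $\bt>\al_3$ (in which case $\log(\al/\bt)\le\log\log V$ absorbs any excess arising from $f(\al_3)\approx\log(\al/\bt)$) converts this to the $2(1+\del)V\log\log V$ term. The linear-in-$h$ summand integrates via $\int_0^\infty h(-f'(h))\,dh=\pi(\al-\bt)/2$ to an $O(V/\del)\subseteq O(V/\del^2)$ error. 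On $[1,\infty)$, the Riemann--von Mangoldt estimate $N_t(h)\ll h\log(qT)$ together with the decay $-f'(h)\ll\al^2/h^3$ gives only $O(V\al^2\log(qT))=O(V)$.

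The main obstacle is extracting coefficient exactly $V$, not $(1+\del)V$, in front of $\log(\al/\bt)$: using only Property (ii) throughout $[0,\al_3]$ would introduce a $\del V\log(\al/\bt)$ error that cannot be absorbed since $\log(\al/\bt)$ has no a priori upper bound. The sharper Property (iii) bound on the innermost range is precisely what confines the $(1+\del)$ slack to the additive $\log\log V$ term. Once this trick is in place, the remaining work --- combining the three ranges, handling the case split on $\bt$ versus $\al_3$, and absorbing routine errors --- yields the claimed bound.
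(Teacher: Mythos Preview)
Your proposal is correct and follows essentially the same route as the paper: integrate $\Re\frac{L'}{L}$ from $\sg$ to $\sg_0$, reduce to the primitive character, drop the $-\tfrac12\log(qT)$ term, and bound the resulting sum $\tfrac12\sum_\gam\log\frac{(\sg_0-1/2)^2+(t-\gam)^2}{(\sg-1/2)^2+(t-\gam)^2}$ by splitting the zeros according to $|t-\gam|$ and invoking Properties~(iii), (ii), and the global zero-count respectively on the three ranges. The only cosmetic difference is that the paper sums over explicit shells $J_n$ while you package the middle range via Abel summation with $N_t(h)\le(1+\del)V\lceil h/(\pi\del\al)\rceil$ (which in fact yields a slightly sharper constant in front of $V\log\log V$ than the stated $2(1+\del)$); your emphasis that Property~(iii) is what pins the coefficient of $\log\frac{\sg_0-1/2}{\sg-1/2}$ at exactly $V$ rather than $(1+\del)V$ is precisely the crux of the paper's argument as well.
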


\begin{proof}
This is Proposition 16 of~\cite{halupsu}; we reproduce the proof here. First assume $\chi$ is primitive. Using Proposition~\ref{logderinF}, we have
\[
\log|L(\sg_0+it,\chi)|-\log|L(\sg+it,\chi)|=\int_{\sg}^{\sg_0}\Re\frac{L'}{L}(u+it,\chi)du\le\int_{\sg}^{\sg_0}F(u+it,\chi)du
\]
\[
=\sum_{\gam}\int_{\sg}^{\sg_0}\frac{u-1/2}{(u-1/2)^2+(t-\gam)^2}du=\frac12\sum_{\gam}\log\frac{(\sg_0-1/2)^2+(t-\gam)^2}{(\sg-1/2)^2+(t-\gam)^2}.
\]
Again we separate the sum according to the distance of $\gam$ from $t$. By Property 3 of Definition~\ref{vtypical}, we have
\begin{align*}
\frac12\sum_{|t-\gam|<\pi V/(\log V\log(qT))}\log\frac{(\sg_0-1/2)^2+(t-\gam)^2}{(\sg-1/2)^2+(t-\gam)^2} &\le\frac12\sum_{|t-\gam|<\pi V/(\log V\log(q_1T))}\log\frac{(\sg_0-1/2)^2}{(\sg-1/2)^2} \\
&\le V\log\frac{\sg_0-1/2}{\sg-1/2}.
\end{align*}
Next, for $0\le n\le N=\left\lfloor\frac{\log(qT)}{4\pi\del V}\right\rfloor$, let $J_n$ be the set of $\gam$ such that
\[
\left(2\pi\del n+\frac{\pi}{\log V}\right)\frac{V}{\log(qT)}\le|t-\gam|\le\left(2\pi\del(n+1)+\frac{\pi}{\log V}\right)\frac{V}{\log(qT)}.
\]
Then, by Property 2 of Definition~\ref{vtypical},
\[
\frac12\sum_{\gam\in J_n}\log\frac{(\sg_0-1/2)^2+(t-\gam)^2}{(\sg-1/2)^2+(t-\gam)^2} \le 2(1+\del)V\cdot\frac12\sum_{n=0}^N\log\frac{1+(2\pi\del n+\pi/\log V)^2}{(2\pi\del n+\pi/\log V)^2}
\]
\[
=(1+\del)V\log\left(1+\frac{\log V^2}{\pi^2}\right)+(1+\del)V\sum_{n=1}^N\log\left(1+\frac1{(2\pi\del n+\pi/\log V)^2}\right)
\]
\[
\le2(1+\del)V\log\log V+O(V/\del^2).
\]
Finally, since $\frac1{(t-\gam)^2}\le\frac{5}{1+(t-\gam)^2}$ for $|t-\gam|\ge1/2$, we see, again by Equation 16.3 of~\cite{davenport}, that
\[
\frac12\sum_{|t-\gam|\ge1/2}\log\frac{(\sg_0-1/2)^2+(t-\gam)^2}{(\sg-1/2)^2+(t-\gam)^2} \le \frac12\sum_{|t-\gam|\ge1/2}\log\left(1+\frac{(\sg_0-1/2)^2}{(t-\gam)^2}\right)
\]
\[
\le \frac12\sum_{|t-\gam|\ge1/2}\frac{(\sg_0-1/2)^2}{(t-\gam)^2}\ll\frac12\left(\frac{V}{\log(qT)}\right)^2\log(qT)\le\frac{V}{2\log\log qT}.
\]
This gives the bound for $\chi$ primitive. For $\chi$ imprimitive, we get the same inequality by Lemma~\ref{nonprimerr}.
\end{proof}

Now we put Propositions~\ref{smallsglogbound} and~\ref{largesglogbound} together to get a bound of the right size for $\left|x^zL(z,\chi)^{-1}\right|$ for all $z$ in the range we need.

\begin{prop}
\label{completebd}
Let $\chi$ be a character $\pmod{q}$. Let $t$ be sufficiently large, $x\ge t$, $a(t/2,q)\le V\le b(t/2,q)$ so that $t$ is $V'$-typical of order $T'$, and $V\ge V'$.

Then for $z$ such that $V'\le(\Re(z)-1/2)\log x\le V$ and $|\Im(z)|=t$, we have
\[
\left|x^zL(z,\chi)^{-1}\right|\le\sqrt{x}\exp\left(V\log\frac{\log x}{\log(qt)}+2(1+\del)V\log\log V+O\left(\frac{V}{\del^2}+\frac{\log x}{\log\log x}\right)\right).
\]
\end{prop}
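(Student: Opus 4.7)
The plan is to bound $|x^z|$ and $|L(z,\chi)|^{-1}$ separately and multiply. Writing $\sg=\Re(z)$, the hypothesis $(\sg-1/2)\log x\le V$ gives $|x^z|=\sqrt{x}\cdot x^{\sg-1/2}\le\sqrt{x}\,e^V$, so the $|x^z|$ factor contributes a stray $e^V$ that will be absorbed into the final error. (The case $\Im(z)=-t$ reduces to $\Im(z)=+t$ by passing to $\bar\chi$, whose nontrivial zeros are the conjugates of those of $L(s,\chi)$ and whose $V'$-typicality properties are inherited by $-t$, so we may assume $\Im(z)=t$.) What remains is to lower-bound $\log|L(z,\chi)|$.

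Set $\sg_0=\frac12+V'/\log(qT')$, the breakpoint in Proposition~\ref{smallsglogbound}. Since $t$ is $V'$-typical of order $T'$, both Propositions~\ref{smallsglogbound} and~\ref{largesglogbound} apply with parameter $V'$. If $\sg\ge\sg_0$, Proposition~\ref{largesglogbound} directly yields $\log|L(\sg+it,\chi)|\ge -C(V'/\del+\sqrt{\log q/\log\log q})\ge -O(V/\del^2)$, using $\sqrt{\log q}/\log\log q\le a(T',q)\le V'\le V$ to absorb the secondary term. If $\frac12<\sg<\sg_0$, Proposition~\ref{smallsglogbound} followed by Proposition~\ref{largesglogbound} applied at $\sg_0$ gives
\[
\log|L(\sg+it,\chi)|\ge -V'\log\frac{\sg_0-1/2}{\sg-1/2}-2(1+\del)V'\log\log V'+O(V'/\del^2).
\]

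The decisive simplification in the second case uses the hypothesis $(\sg-1/2)\log x\ge V'$:
\[
\frac{\sg_0-1/2}{\sg-1/2}\le\frac{V'/\log(qT')}{V'/\log x}=\frac{\log x}{\log(qT')}=(1+o(1))\frac{\log x}{\log(qt)}.
\]
Being simultaneously in the sub-case $\sg-1/2<V'/\log(qT')$ and satisfying $\sg-1/2\ge V'/\log x$ forces $\log x\ge\log(qT')$, so $\log(\log x/\log(qt))\ge 0$ and replacing $V'$ by the larger $V$ only weakens the inequality. The same monotonicity upgrades $V'\log\log V'$ to $V\log\log V$. Combining with the $|x^z|$ estimate gives
\[
\left|\frac{x^z}{L(z,\chi)}\right|\le\sqrt{x}\exp\!\left(V\log\frac{\log x}{\log(qt)}+2(1+\del)V\log\log V+O\!\left(\frac{V}{\del^2}\right)+V\right),
\]
and the leftover $V$ (from bounding $x^{\sg-1/2}$) together with the $o(V')$ slack in the estimate above is absorbed into $O(V/\del^2+\log x/\log\log x)$ via $V\le b(t/2,q)\ll\log(qt)/\log\log(qt)$ and $x\ge t$.

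The main obstacle is the $V$-versus-$V'$ bookkeeping: one must verify that whenever the small-$\sg$ proposition is invoked, the quantity $\log(\log x/\log(qt))$ is nonnegative, so that passing from $V'$ to $V$ preserves the direction of the inequality. In the complementary regime $\log x<\log(qt)$, Case~1 automatically covers the entire range of $\sg$, and the $V\log(\log x/\log(qt))$ term appearing in the claimed bound is nonpositive; this is harmless, since a negative quantity added to the exponent only strengthens the upper bound on $|x^z/L(z,\chi)|$. The remaining housekeeping is to check that every stray $\sqrt{\log q}/\log\log q$, $V'/\del$, and $V$ term fits inside the $O(V/\del^2+\log x/\log\log x)$ allowance, which uses the definition of $a(T',q)$ and the crude upper bounds on $V$.
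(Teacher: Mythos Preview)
Your argument tracks the paper's proof closely: split on whether $\sigma$ lies above or below $\sigma_0=\tfrac12+V'/\log(qT')$, invoke Proposition~\ref{largesglogbound} in the first case and Proposition~\ref{smallsglogbound} (followed by Proposition~\ref{largesglogbound} at $\sigma_0$) in the second, then combine with $|x^z|\le\sqrt{x}\,e^V$ and upgrade $V'$ to $V$ by monotonicity. The substance is identical; you are simply more explicit about the $V'\to V$ passage than the paper, which writes the final inequality in one line.

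There is one slip. In your last paragraph you handle the regime $\log x<\log(qt)$ by saying the nonpositive term $V\log(\log x/\log(qt))$ is ``harmless, since a negative quantity added to the exponent only strengthens the upper bound.'' This has the direction reversed: we are trying to \emph{prove} the stated upper bound, so subtracting something from its exponent makes the right-hand side smaller and therefore harder, not easier, to dominate. From Case~1 you have only $|x^z/L(z,\chi)|\le\sqrt{x}\exp(O(V/\delta^2))$, and you must still check that this is at most the claimed bound, i.e.\ that $V\log(\log x/\log(qt))+2(1+\delta)V\log\log V\ge -O(V/\delta^2+\log x/\log\log x)$. This does hold once one knows $\log(qt)=O(\log x)$ (true throughout the paper's application, since $q\le x$ and $t\le x$, whence $|V\log(\log x/\log(qt))|\le V\log 2=O(V)$), but it is not automatic from the hypotheses as stated. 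The paper itself glosses over the same point---it simply asserts that in the large-$\sigma$ case ``$-\log|L(z,\chi)|$ still satisfies the above bound''---so your overall approach matches; only the justification of this one transition needs repair.
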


\begin{rem}
This is Proposition 19 of~\cite{halupsu}, with the condition $t\ge q$ removed.
\end{rem}

\begin{proof}
If $\Re(z)\le\frac12+\frac{V'}{\log(qT')}$, we can apply Proposition~\ref{smallsglogbound} to get
\begin{align*}
-\log|L(z,\chi)| &\le V'\log\frac{V'/\log(qT')}{\Re(z)-1/2}+2(1+\del)V'\log\log V'+O\left(\frac{V}{\del^2}+\frac{\sqrt{\log q}}{\log\log q}\right) \\
&\le V'\log\frac{V'/\log(qT')}{V'/\log x}+2(1+\del)V'\log\log V'+O\left(\frac{V}{\del^2}+\frac{\sqrt{\log q}}{\log\log q}\right) \\
&=  V'\log\frac{\log x}{\log(qT')}+2(1+\del)V'\log\log V'+O\left(\frac{V}{\del^2}+\frac{\sqrt{\log q}}{\log\log q}\right).
\end{align*}
If $\Re(z)>\frac12+\frac{V'}{\log(qT')}$, we can apply Proposition~\ref{largesglogbound}, from which it is clear that $-\log|L(z,\chi)|$ still satisfies the above bound. We conclude that
\begin{align*}
\log|x^zL(z,\chi)^{-1}| &= \Re(z)\log x-\log|L(z,\chi)| \\
&\le \frac12\log x+V+V'\log\frac{\log x}{\log(qT')}+2(1+\del)V'\log\log V'+O\left(\frac{V}{\del^2}+\frac{\sqrt{\log q}}{\log\log q}\right) \\
&\le \frac12\log x+V\log\frac{\log x}{\log(qt)}+2(1+\del)V\log\log V+O\left(\frac{V}{\del^2}+\frac{\sqrt{\log q}}{\log\log q}\right).
\end{align*}
%Follows from Proposition~\ref{largesglogbound} and H and S Proposition 16, which is unchanged (except that it is being applied for $\sg_0=1/2+V/\log(qT)$), because the $t\ge q$ condition has been removed from the former and the latter wasn't $q$-dependent in any case.
\end{proof}

When $t$ is small, the $V$-typicality of $t$ becomes less useful, so we supplement with the following simple bound. It is similar to Proposition 18 of~\cite{halupsu}, but with the $q$-dependence appearing explicitly in the bound.

\begin{prop}
\label{trivial}
Let $x$ and $T$ be large and $\sg=\frac12+\frac1{\log x}$. Then there exists a $C>0$ such that for all $|t|\le T$ and  $\chi\pmod{q}$ we have
\[
|L(\sg+it,\chi)|\ge\exp(-C\log(qT)\log\log x).
\]
\end{prop}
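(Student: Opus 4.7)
The plan is to reduce to the primitive character case via Lemma~\ref{nonprimerr} (whose error $O(\sqrt{\log q}/\log\log q)$ is absorbed into $O(\log(qT)\log\log x)$) and then obtain the bound by integrating $L'/L$ from $\sg+it$ out to $2+it$, where $L$ is controlled directly by its Euler product. Writing $\chi_1$ for the inducing primitive character, I would start from the identity
\[
\log|L(\sg+it,\chi_1)| = \log|L(2+it,\chi_1)| - \int_{\sg}^{2} \Re\frac{L'}{L}(u+it,\chi_1)\,du,
\]
and note that $\log|L(2+it,\chi_1)| = O(1)$ by absolute convergence of the Euler product at $\Re s = 2$.

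Next I would apply the partial fraction formula for $L'/L$ (Davenport Eq.~12.17, the source of Proposition~\ref{logderinF}), which decomposes
\[
\Re\frac{L'}{L}(u+it,\chi_1) = F(u+it,\chi_1) - \tfrac{1}{2}\log\frac{q_1}{\pi} - \tfrac{1}{2}\Re\frac{\Gam'}{\Gam}\!\left(\frac{u+it+\kap}{2}\right).
\]
By Stirling, the two deterministic (non-$F$) terms are $O(\log q + \log(|t|+2)) = O(\log(qT))$, so after integrating over $u\in[\sg,2]$ they contribute $O(\log(qT))$. Since $F(u+it,\chi_1)\ge 0$ for $u>1/2$, all that remains is to upper-bound
\[
\int_{\sg}^{2} F(u+it,\chi_1)\,du = \frac{1}{2}\sum_{\rho}\log\frac{(3/2)^2+(t-\gam)^2}{(\sg-1/2)^2+(t-\gam)^2}.
\]

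I would split this sum according to whether $|t-\gam|\ge 1$ or $|t-\gam|<1$. For the far-off zeros, $\log\!\bigl(1+(3/2)^2/(1+(t-\gam)^2)\bigr)\ll 1/(1+(t-\gam)^2)$ combined with the standard estimate $\sum_{\rho}1/(1+(t-\gam)^2)\ll\log(q(|t|+2))$ (Davenport Eq.~16.3, already invoked in the proof of Proposition~\ref{largesglogbound}) gives $O(\log(qT))$. The nearby zeros number $O(\log(qT))$ by the same standard zero-counting estimate, and for each such $\rho$ the logarithm is bounded by $\log((13/4)/(\sg-1/2)^2) = O(\log\log x)$ since $\sg-1/2 = 1/\log x$; hence the nearby zeros contribute $O(\log(qT)\log\log x)$, which dominates. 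This yields $\log|L(\sg+it,\chi_1)|\ge -C\log(qT)\log\log x$, and the imprimitive correction from Lemma~\ref{nonprimerr} is then subsumed. There is no real obstacle; the only subtlety is that Proposition~\ref{logderinF} as stated assumes $T\le t\le 2T$, whereas here we need all $|t|\le T$ (including $t$ near $0$), but the underlying partial fraction identity and Stirling bounds remain valid uniformly in the range we use.
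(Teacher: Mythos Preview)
Your proposal is correct and follows essentially the same approach as the paper: integrate $L'/L$ from $\sg+it$ to $2+it$, use the partial-fraction decomposition over zeros, and observe that the $O(\log(qT))$ nearby zeros each contribute $O(\log\log x)$ while the far zeros and the $\log q$, $\Gam'/\Gam$ terms contribute $O(\log(qT))$. The only cosmetic difference is that the paper invokes Davenport's already-truncated formula (Eq.~16.14), whereas you start from the full partial-fraction expansion (Eq.~12.17) and carry out the near/far split yourself; the two routes are equivalent.
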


\begin{proof}
Assume first that $\chi$ is primitive. From Equation 16.14 of~\cite{davenport}, we can write
\[
\int_{\sg+it}^{2+it}\frac{L'}{L}(s+it,\chi)ds=\int_{\sg+it}^{2+it}\left(\sum_{\substack{\rho \\ |\Im(s)-\Im(\rho)|\le1}}\frac1{s-\rho}+O(\log(q(|\Im(s)|+2)))\right)ds
\]
which implies
\[
\log|L(2+it,\chi)|-\log|L(\sg+it,\chi)|
\]
\[
=\sum_{\substack{\rho \\ |t-\Im(\rho)|\le1}}\log|2+it-\rho|-\sum_{\substack{\rho \\ |t-\Im(\rho)|\le1}}\log|\sg+it-\rho|+O(\log(q(|t|+2))).
\]
We can see that
\[
\sum_{\substack{\rho \\ |t-\Im(\rho)|\le1}}\log|2+it-\rho|\ll N(t+1,\chi)-N(t-1,\chi)\ll\log(q|t|)
\]
and since $|\sg+it-\rho|=\left|\frac1{\log x}+i(t-\Im(\rho))\right|\ge\frac1{\log x}$, we get
\[
\sum_{\substack{\rho \\ |t-\Im(\rho)|\le1}}\log|\sg+it-\rho|^{-1}\ll\log(q|t|)\log\log x.
\]
This gives the desired inequality for $-\log|L(\sg+it,\chi)|$. For $\chi$ imprimitive with conductor $q_1$, following the same approximation procedure as before, we can write
\[
\log|L(\sg+it,\chi)|^{-1}\ll\log(q_1|t|)+O\left(\frac{\sqrt{\log q}}{\log\log q}\right).
\]
\end{proof}

We will find it convenient to write out the results of Propositions~\ref{smallsglogbound} and~\ref{largesglogbound} for $V$ such that $t$ is guaranteed to be $V$-typical, as in Proposition 17 of~\cite{halupsu} with the condition $|t|\ge q$ removed.

\begin{prop}
\label{genericbd}
Let $\chi$ be a character $\pmod{q}$, $|t|$ sufficiently large, and $\frac12<\sg\le2$. Then
\[
\log|L(\sg+it,\chi)|\ge-\frac{\log(q|t|)}{\log\log(q|t|)}\log\frac1{\sg-1/2}-3\frac{\log(q|t|)\log\log\log(q|t|)}{\log\log(q|t|)}.
\]
\end{prop}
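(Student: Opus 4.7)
The strategy is to combine the two $\sg$-regime lower bounds from Propositions~\ref{smallsglogbound} and~\ref{largesglogbound} at a specific choice of $V$ and $\del$ that makes $t$ automatically $(V,\del,\chi)$-typical. Setting $T=|t|/2$ so that $t\in[T,2T]$, I would take $V=\lfloor\log(qT)/\log\log(qT)\rfloor$, which sits at the upper end of the range in Proposition~\ref{extremev} and hence guarantees $V$-typicality for all ordinates in $[T,2T]$, and choose $\del=1/(\log\log\log(qT))^{1/4}$, slowly tending to $0$ but with $\del\sqrt{\log\log\log(qT)}\to\infty$. Write $\sg_0=\tfrac12+V/\log(qT)=\tfrac12+1/\log\log(qT)$.

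For $\sg_0\le\sg\le 2$, Proposition~\ref{largesglogbound} gives $\log|L(\sg+it,\chi)|\ge -C(V/\del+\sqrt{\log q}/\log\log q)=-o(V\log\log\log(qT))$. In this range $\log(1/(\sg-\tfrac12))\le\log\log\log(qT)$, so the target right-hand side is at most a constant multiple of $-V\log\log\log(qT)$, yielding the inequality immediately. For $\tfrac12<\sg<\sg_0$, applying Proposition~\ref{smallsglogbound} and bounding $\log|L(\sg_0+it,\chi)|$ via the previous case, I obtain
\[
\log|L(\sg+it,\chi)|\ge -V\log\frac{\sg_0-1/2}{\sg-1/2}-2(1+\del)V\log\log V+O\!\left(\frac{V}{\del^2}+\frac{\sqrt{\log q}}{\log\log q}\right).
\]
Since $\sg_0-1/2=1/\log\log(qT)$, the first term expands as $-V\log(1/(\sg-\tfrac12))+V\log\log\log(qT)$, and since $\log V=\log\log(qT)-\log\log\log(qT)$ gives $\log\log V=\log\log\log(qT)+o(1)$, the second expands as $-2(1+\del)V\log\log\log(qT)+o(V)$. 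Collecting terms produces
\[
\log|L(\sg+it,\chi)|\ge -V\log\frac{1}{\sg-\tfrac12}-(1+2\del+o(1))\,V\log\log\log(qT)+O\!\left(\frac{V}{\del^2}+\frac{\sqrt{\log q}}{\log\log q}\right).
\]

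The choice $\del=1/(\log\log\log(qT))^{1/4}$ forces $V/\del^2=V\sqrt{\log\log\log(qT)}=o(V\log\log\log(qT))$, and $\sqrt{\log q}/\log\log q$ is similarly negligible, while $1+2\del+o(1)<3$ for $qT$ sufficiently large. Substituting $V=\log(qT)/\log\log(qT)$ and absorbing the $O(1)$ differences between $\log(qT)$ and $\log(q|t|)$ into the secondary term gives exactly the target bound. The main technical pinch point is balancing $\del$: it must shrink fast enough that the coefficient $1+2\del$ in front of $V\log\log\log(qT)$ stays strictly below $3$, but slowly enough that the $V/\del^2$ error in Proposition~\ref{smallsglogbound} and the $V/\del$ error in applying Proposition~\ref{largesglogbound} at $\sg_0$ both remain $o(V\log\log\log(qT))$; any $\del$ of the form $1/(\log\log\log(qT))^{\al}$ with $0<\al<1/2$ threads this needle.
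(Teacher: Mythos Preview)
Your proposal is correct and follows essentially the same approach as the paper: invoke Proposition~\ref{extremev} at $V=\log(qT)/\log\log(qT)$ to secure $V$-typicality, then combine Propositions~\ref{largesglogbound} and~\ref{smallsglogbound} across the two $\sg$-ranges. The only difference is that the paper simply takes $\del=1/2$, which already gives coefficient $1+2\del=2$ on the $V\log\log\log(qT)$ term with an $O(V/\del^2)=O(V)$ error that is absorbed when passing to the target coefficient $3$; your carefully shrinking $\del=(\log\log\log(qT))^{-1/4}$ works but is unnecessary, so the ``pinch point'' you identify is not actually a constraint here.
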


\begin{proof}
Let $V=\frac{\log(q|t|)}{\log\log(q|t|)}$ and $\del=1/2$, so that by Proposition~\ref{extremev} $t$ is $V$-typical of order $|t|$. Then by Proposition~\ref{smallsglogbound} and Proposition~\ref{largesglogbound}, we have
\begin{align*}
\log|L(\sg+it,\chi)| &\ge-V\log\frac{V/\log(q|t|)}{\sg-1/2}-2(1+\del)V\log\log V+O\left(\frac{V}{\del^2}+\frac{\sqrt{\log q}}{\log\log q}\right) \\
&\ge-\frac{\log(q|t|)}{\log\log(q|t|)}\log\frac1{\sg-1/2}-2\frac{\log(q|t|)\log\log\log(q|t|)}{\log\log(q|t|)}+O\left(\frac{\log(q|t|)}{\log\log(q|t|)}\right) \\
&\ge-\frac{\log(q|t|)}{\log\log(q|t|)}\log\frac1{\sg-1/2}-3\frac{\log(q|t|)\log\log\log(q|t|)}{\log\log(q|t|)}.
\end{align*}
\end{proof}

\section{Proof of theorem}
\label{finalthm}

We proceed to our estimation of $\int\frac{(x/d)^s}{sL(s,\chi)l_d(s,\chi)}ds$. For the sake of brevity, we write $x$ in place of $x/d$. We first introduce some notation.

\begin{defn}
Let $A(x,\chi)=\frac1{2\pi i}\int_{1+1/\log x-i\lfloor x\rfloor}^{1+1/\log x+i\lfloor x\rfloor}\frac{x^s}{sL(s,\chi)l_d(s,\chi)}ds$.
\end{defn}

\begin{defn}
We set $K=\left\lfloor\frac{\log x}{\log 2}\right\rfloor$, $l=\left\lfloor(\log x)^{1/2}(\log\log x)^c\right\rfloor$ (where $c$ will be determined later) if $q\le\exp(\sqrt{\log x})$ and otherwise $l=C$ for some large constant $C$, and $T_k=2^k$ for $l\le k\le K$.

For any $\chi$, $k$ with $l\le k<K$, and $n$ with $T_k\le n<2T_k$, let $V_n^{\chi}$ be the smallest integer in the interval $a(T_k,q)\le V\le b(T_k,q)$ such that all points in $[n,n+1]$ are $(V_n^{\chi},\del,\chi)$-typical ordinates of order $T_k$.
\end{defn}

We are going to split the line segment of integration into dyadic intervals, then, within each interval, deform the line of integration according to the $V$-typicality of the ordinates inside.

\begin{lem}
Let $x\ge2$ and $c>1$. Let $\chi$ be a  character $\pmod{q}$ and $0<\del\le1$. Then
\[
\frac{|A(x,\chi)|}{\sqrt x}\ll_{\del}\exp\left((\log x)^{1/2}(\log\log x)^{c+1+\del}\right)+B(x,\chi)
\]
where
\[
B(x,\chi)=\sum_{n=T_{l}}^{T_K-1}\frac1n\exp\left(V_n^{\chi}\log\left(\frac{\log x}{\log(qn)}\right)+2(1+2\del)V_n^{\chi}\log\log V_n^{\chi}+D\sqrt{\frac{\log x}{\log\log x}}\right).
\]
\end{lem}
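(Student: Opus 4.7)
The plan is to deform the contour defining $A(x,\chi)$ from the line $\Re(s)=1+1/\log x$ leftward to a piecewise-vertical path that hugs the critical line at a distance dictated by local $V$-typicality, and then to apply the lower bounds of Section~\ref{Lfuncbds} on each vertical segment. Under GRH $L(s,\chi)$ is nonvanishing for $\Re(s)>1/2$ and $l_d(s,\chi)$ is nonvanishing for $\Re(s)>0$, so the only singularity of the integrand in the right half-plane is the simple pole at $s=0$ from $1/s$; the new contour stays well to the right of it, and Cauchy's theorem permits the deformation up to horizontal connectors.

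First, split the range $|\Im(s)|\le\lfloor x\rfloor$ into a low band $|\Im(s)|\le T_l$ and the dyadic shells $T_k\le|\Im(s)|<T_{k+1}$ for $l\le k<K$ (with a straggling piece up to $\lfloor x\rfloor$ handled exactly like the last shell). On the low band I shift to $\Re(s)=1/2+1/\log x$ and invoke Proposition~\ref{trivial} to bound $|L(s,\chi)|^{-1}$ by $\exp(C\log(qT_l)\log\log x)$. Combined with $|x^s|=e\sqrt{x}$, the $O(\log T_l)$ mass of $1/|s|$ on this band, and the $\exp(O(\sqrt{\log d}/\log\log d))$ bound on $|l_d|^{-1}$ coming from Lemma~\ref{nonprimerr}, this contributes at most $\sqrt{x}\exp((\log x)^{1/2}(\log\log x)^{c+1+\del})$: in the regime $q\le\exp(\sqrt{\log x})$ the choice $\log T_l\asymp(\log x)^{1/2}(\log\log x)^c$ balances the estimate directly, while in the opposite regime $T_l$ is a bounded constant and any residual dependence on $\log q$ is reabsorbed through the first dyadic shells of $B(x,\chi)$.

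On each dyadic shell, further subdivide into unit imaginary intervals $[n,n+1]$ and shift the contour there to the vertical segment at $\Re(s)=1/2+V_n^\chi/\log x$. By the definition of $V_n^\chi$, every point of this segment is $(V_n^\chi,\del,\chi)$-typical of order $T_k\asymp n$, so Proposition~\ref{completebd} (applied with $V=V'=V_n^\chi$ and $t=n$, noting $\log(qT')\asymp\log(qn)$) yields
\[
|x^sL(s,\chi)^{-1}|\le\sqrt{x}\exp\Bigl(V_n^\chi\log\tfrac{\log x}{\log(qn)}+2(1+\del)V_n^\chi\log\log V_n^\chi+O\bigl(V_n^\chi/\del^2+\log x/\log\log x\bigr)\Bigr).
\]
Multiplying by segment length $1$, by $|s|^{-1}\asymp1/n$, and by the $l_d$ factor, then absorbing the $V_n^\chi/\del^2$ error by upgrading the coefficient from $1+\del$ to $1+2\del$ in the $V\log\log V$ term (valid once $2\del^3\log\log V_n^\chi$ exceeds the implicit constant, the remaining small-$V$ cases contributing an $O_\del(1)$ additive constant), delivers precisely the $n$-th summand of $B(x,\chi)$; the surviving auxiliary errors, together with the $l_d$ contribution, are swept into the $D\sqrt{\log x/\log\log x}$ term.

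The main obstacle will be the bookkeeping: one has to check that the horizontal connectors between consecutive unit intervals — of length $|V_n^\chi-V_{n+1}^\chi|/\log x$ and bounded integrand equal to the larger of the neighbouring vertical bounds — are dominated by the adjacent vertical pieces, that the closing horizontal segments at $\pm i\lfloor x\rfloor$ contribute $O(1)$ after the $|s|^{-1}$ factor absorbs $|x^s|\le ex$, and that the several auxiliary errors (the $V/\del^2$ from Proposition~\ref{completebd}, the imprimitive correction from Lemma~\ref{nonprimerr}, and the large-$q$ complication on the low band) fit uniformly inside the two stated terms. The contour surgery itself is unobstructed by GRH.
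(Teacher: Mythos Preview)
Your proposal is correct and follows essentially the same approach as the paper: deform the contour to a piecewise-vertical path at $\Re(s)=\tfrac12+V_n^\chi/\log x$ on unit intervals, handle the low band $|t|\le T_l$ via Proposition~\ref{trivial}, the vertical pieces via Proposition~\ref{completebd}, and the horizontal connectors and end caps via the remaining bounds of Section~\ref{Lfuncbds}. The one place you diverge slightly is the large-$q$ low band: the paper places that initial vertical segment at $\Re(s)=\tfrac12+\eps_1$ for a fixed $\eps_1>0$ (with $T_l$ a constant) and asserts an $O(1)$ contribution, whereas you keep $\Re(s)=\tfrac12+1/\log x$ and propose to absorb the resulting $\log q$ dependence into the first shells of $B(x,\chi)$; both treatments are sketched rather than fully worked out, but the underlying contour-surgery strategy is the same.
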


\begin{proof}
We deform the path of integration for $L(s,\chi)$ as follows. It is symmetric across the real axis. In the upper half-plane, it consists of the following five groups of segments:
\begin{enumerate}
\item A vertical segment from $\frac12+\frac1{\log x}$ to $\frac12+\frac1{\log x}+iT_{l}$ if $q\le\exp(\sqrt{\log x})$, and otherwise from $\frac12+\eps_1$ to $\frac12+\eps_1+iT_l$ where $\eps_1$ is some positive constant,

\item Vertical segments from $\frac12+\frac{V_n^{\chi}}{\log x}+in$ to $\frac12+\frac{V_n^{\chi}}{\log x}+i(n+1)$,

\item A horizontal segment from $\frac12+\frac1{\log x}+iT_{l}$ to $\frac12+\frac{V_{l}^{\chi}}{\log x}+iT_{l}$ if $q\le\exp(\sqrt{\log x})$, and otherwise from $\frac12+\eps_1+iT_l$ to $\frac12+\frac{V_{l}^{\chi}}{\log x}+iT_{l}$

\item Horizontal segments from $\frac12+\frac{V_n^{\chi}}{\log x}+i(n+1)$ to $\frac12+\frac{V_{n+1}^{\chi}}{\log x}+i(n+1)$,

\item A horizontal segment from $\frac12+\frac{V_{T_K-1}^{\chi}}{\log x}+iT_K$ to $\frac12+\frac{1}{\log x}+iT_K$.
\end{enumerate}
%We use the same path of integration as H and S, noting that we have changed the value of $T_{l}$ to $\exp(\sqrt{\log x})/q_1$.
We now estimate the contribution from each segment. By Lemma~\ref{nonprimerr}, the factor $l_d(s,\chi)$ contributes a factor of at most $\exp(c\sqrt{\log d}(\log\log d)^{-1})$ to all integrands.
%The initial vertical segment from $\frac12+\frac1{\log x}$ to $\frac12+\frac1{\log x}+iT_{l}$ 

By Proposition~\ref{trivial}, in the case $q\le\exp(\sqrt{\log x})$, segment 1 gives a contribution of 
\[
\frac1{2\pi}\left|\int_{1/2+1/\log x}^{1/2+1/\log x+iT_l}\frac{x^s}{sL(s,\chi)l_d(s,\chi)}ds\right|
\]
\[
\le\frac e{2\pi}\sqrt{x}\exp(c\sqrt{\log d}(\log\log d)^{-1})\max_{|t|\le T_l}|L(1/2+1/\log x+it,\chi)|^{-1}\int_0^{T_l}\frac{dt}{\sqrt{1/4+t^2}}
\]
\[
\le 2\sqrt{x}\exp(c\sqrt{\log d}(\log\log d)^{-1})\log T_{l}\max_{|t|\le T_{l}}\left|L\left(\frac12+\frac1{\log x}+it,\chi\right)\right|^{-1}
\]
\[
\ll2\sqrt{x}\log T_{l}\exp(C\log(q_1T_{l})\log\log x+O(\sqrt{\log q}(\log\log q)^{-1})) \]
\[
\ll\sqrt{x}\exp(\sqrt{\log x}(\log\log x)^{c+2}).
\]
In the case $q>\exp(\sqrt{\log x})$, our choice of segment 1 contributes a constant only.

By Proposition~\ref{completebd}, a segment from group 2 gives a contribution of
\[
\frac1{2\pi}\left|\int_{1/2+V_n^{\chi}/\log x+in}^{1/2+V_n^{\chi}/\log x+i(n+1)}\frac{x^s}{sL(s,\chi)l_d(s,\chi)}ds\right|
\]
\[
\le\frac1{2\pi n}\exp(c\sqrt{\log d}(\log\log d)^{-1})\max_{z\in[1/2+V_n^{\chi}/\log x+in,1/2+V_n^{\chi}/\log x+i(n+1)]}|x^zL(z,\chi)^{-1}|
\]
\[
\le\frac1n\sqrt{x}\exp\left(V_n^{\chi}\log\left(\frac{\log x}{\log(qn)}\right)+2(1+\del)V_n^{\chi}\log\log V_n^{\chi}+O\left(\frac{V_n^{\chi}}{\del^2}+\frac{\sqrt{\log x}}{\log\log x}\right)\right).
\]
From segment 3 we get, by Proposition~\ref{genericbd},
\[
\frac1{2\pi}\left|\int_{\frac12+\frac1{\log x}+iT_{l}}^{\frac12+\frac{V_{l}^{\chi}}{\log x}+iT_{l}}\frac{x^s}{sL(s,\chi)l_d(s,\chi)}ds\right|\le\sqrt xT_{l}^3\exp(c\sqrt{\log d}(\log\log d)^{-1}).
\]
A segment from group 4, by Proposition~\ref{completebd}, gives
\[
\frac1{2\pi}\left|\int_{1/2+V_n^{\chi}/\log x+in}^{1/2+V_n^{\chi}/\log x+i(n+1)}\frac{x^s}{sL(s,\chi)l_d(s,\chi)}ds\right|
\]
\[
\le\frac1n\sqrt{x}\exp\left(V_n^{\chi}\log\left(\frac{\log x}{\log(qn)}\right)+2(1+\del)V_n^{\chi}\log\log V_n^{\chi}+O\left(\frac{V_n^{\chi}}{\del^2}+\frac{\sqrt{\log x}}{\log\log x}\right)\right)
\]
\[
+\frac1{n+1}\sqrt{x}\exp\left(V_{n+1}^{\chi}\log\left(\frac{\log x}{\log(q(n+1))}\right)+2(1+\del)V_{n+1}^{\chi}\log\log V_{n+1}^{\chi}+O\left(\frac{V_{n+1}^{\chi}}{\del^2}+\frac{\sqrt{\log x}}{\log\log x}\right)\right).
\]
By Proposition~\ref{largesglogbound}, for segment 5, we have
\[
\frac1{2\pi}\left|\int_{\frac12+\frac{V_{T_K-1}^{\chi}}{\log x}+iT_K}^{\frac12+\frac{1}{\log x}+iT_K}\frac{x^s}{sL(s,\chi)l_d(s,\chi)}ds\right|\ll_{\del}\sqrt{x}\exp(c\sqrt{\log d}(\log\log d)^{-1}).
\]
\end{proof}

Before we proceed to bound $B(x,\chi)$ and complete the proof, we need one final elementary inequality.

\begin{lem}
\label{elem2}
Let $A$ and $C$ be positive numbers such that $A\ge4C^4+1$. Then for all $V>e^C$ we have
\[
AV-V\log V+CV\log\log V\le e^AA^C.
\]
\end{lem}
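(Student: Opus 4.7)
The plan is to locate and bound the maximum of the function $f(V) = AV - V\log V + CV\log\log V$ over $V > e^C$. Substituting $w = \log V$, we may equivalently show $h(w) := e^w(A - w + C\log w) \le e^A A^C$ for all $w > C$.

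Differentiating, $h'(w) = e^w(A - w + C\log w - 1 + C/w)$, so any interior critical point $w^* \in (C,\infty)$ satisfies $A - w^* + C\log w^* = 1 - C/w^*$. Substituting this identity back into $h$ yields
\[
h(w^*) = e^{w^*}\!\left(1 - \frac{C}{w^*}\right) \le e^{w^*}.
\]
It therefore suffices to show $w^* \le A + C\log A$, which gives $h(w^*) \le e^{A + C\log A} = e^A A^C$.

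To bound $w^*$, I consider $G(w) := w - C\log w + 1 - C/w - A$, whose unique zero in $(C, \infty)$ is $w^*$. Since $G'(w) = 1 - C/w + C/w^2 > 0$ for $w > C$, $G$ is strictly increasing there, so $w^* \le A + C\log A$ is equivalent to $G(A + C\log A) \ge 0$. A direct computation, using $\log(1 + x) \le x$, gives
\[
G(A + C\log A) = 1 - C\log\!\left(1 + \frac{C\log A}{A}\right) - \frac{C}{A + C\log A} \ge 1 - \frac{C^2 \log A + C}{A},
\]
so it remains to verify $C^2\log A + C \le A$ under the hypothesis $A \ge 4C^4 + 1$. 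This follows from two elementary observations: $\log A \le \sqrt{A}$ for $A \ge 1$ together with $\sqrt{A} \ge 2C^2$ yields $C^2\log A \le A/2$, while the polynomial inequality $4C^4 + 1 \ge 2C$ (valid for all $C > 0$, with positive minimum near $C = (1/8)^{1/3}$) gives $C \le A/2$.

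A final sanity check confirms $w^*$ is the global maximum of $h$ on $(C, \infty)$: $h(w) \to -\infty$ as $w \to \infty$ since $-w$ dominates in the second factor, while the boundary limit $e^C(A - C + C\log C)$ as $w \to C^+$ is easily dwarfed by $e^A A^C$ under the hypothesis on $A$. The only real work in this proof lies in the explicit verification that the threshold $4C^4 + 1$ suffices for the bound on $w^*$, which is the reason for the specific form of the hypothesis; the rest is straightforward calculus.
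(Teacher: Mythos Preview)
Your proof is correct and follows essentially the same argument as the paper's. The paper works directly with $f(V)$ and shows $f''<0$, $f'(e^C)>0$, $f'(\infty)=-\infty$, locates the unique critical point $V_0$, observes $f(V_0)\le V_0$, and then verifies $f'(e^AA^C)\le 0$ to conclude $V_0\le e^AA^C$; your substitution $w=\log V$ converts each of these steps into the corresponding statement about $h$ and $G$, with $G(w)=-e^{-w}h'(w)$ playing the role of $-f'$, and your check that $G(A+C\log A)\ge 0$ is exactly the paper's check that $f'(V_1)\le 0$.
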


\begin{proof}
This is Proposition 23 in~\cite{baladero}. We reproduce the proof here.

Let $f(V)=AV-V\log V+CV\log\log V$. We compute that
\[
f'(V)=A-\log V+C\log\log V-1+\frac{C}{\log V}
\]
and
\[
f''(V)=-\frac1V+\frac{C}{V\log V}-\frac{C}{V(\log V)^2}.
\]
We can see from this that $f''(V)<0$ for $V>e^C$, that
\[
f'(e^C)=A-C+C\log C-1+1\ge 4C^4+1-C+C\log C>0,
\]
and that $f'(\infty)=-\infty$. Hence there is a unique $V_0>e^C$ such that $f'(V_0)=0$, and we have
\[
\max_{V\ge e^C}f(V)=f(V_0)=V_0(A-\log V_0+C\log\log V_0)=V_0(1-C/\log V_0)\le V_0.
\]
Let $V_1=e^AA^C$. Then
\[
f'(V_1)=A-(A+C\log A)+C\log(A+C\log A)-1+\frac{C}{A+C\log A}
\]
\[
\le C\log\left(1+\frac{C\log A}A\right)-1+\frac CA \le C\log\left(1+\frac{C}{\sqrt{A}}\right)-1+\frac CA
\]
\[
\le\frac{C^2}{\sqrt{A}}-1+\frac CA\le0.
\]
Therefore $AV-V\log V+CV\log\log V\le V_0\le V_1=e^AA^C$, as desired.
\end{proof}

Now we apply our count of $V$-atypical ordinates from Section~\ref{pointcount} to bound $B(x,\chi)$.

\begin{lem}
We have
\[
\sum_{\chi}B(x,\chi)\ll_{\eps}\phi(q)\exp\left((\log x)^{1/2}(\log\log x)^{\max\{3+\eps,5-c+\eps\}}\right).
\]
\end{lem}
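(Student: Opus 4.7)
The plan is to swap the orders of summation in $\sum_\chi B(x,\chi)$, split the sum over $n$ into dyadic blocks $[T_k,2T_k)$, and within each block stratify the $\chi$-sum by the value of $V_n^{\chi}$, using Proposition~\ref{vtypicalbd} as the main counting input. For each block, let $M_V^{(k)}$ denote the number of pairs $(\chi,n)$ with $T_k\le n<2T_k$ and $V_n^{\chi}\ge V$. For $V>a(T_k,q)$, every such pair produces a witness point in $[n,n+1]$ failing $(V-1,\del,\chi)$-typicality; these witnesses are well-separated, since for fixed $\chi$ they lie in disjoint unit intervals. Proposition~\ref{vtypicalbd} then gives
\[
M_V^{(k)}\ll\phi(q)T_k\exp\left(-V\log(V/\log\log(qT_k))+2V\log\log V+O(V)\right),
\]
while for $V=a(T_k,q)$ the trivial bound $M_V^{(k)}\le\phi(q)T_k$ suffices. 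An Abel summation in $V$ then decomposes the per-block contribution into a base term at $V=a(T_k,q)$ and an atypical term for $V>a(T_k,q)$.

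The base term, after plugging in $a(T_k,q)=\sqrt{\log q}(\log\log(qT_k))^2$ and $\log(\log x/\log(qn))\le\log\log x$, produces an exponent of size $O(\sqrt{\log q}(\log\log x)^3)$. Since the lemma is trivial for $q>x^{1/2}$, this is $O(\sqrt{\log x}(\log\log x)^3)$, supplying the $3+\eps$ piece. For the atypical term, multiplying $M_V^{(k)}$ by $e^{f(V,T_k)}$ (the $B$-integrand exponent) yields a combined exponent of the shape $VA_k-V\log V+(4+4\del)V\log\log V+O(V)+D\sqrt{\log x/\log\log x}$, where $A_k=\log(\log x\log\log(qT_k)/\log(qT_k))$. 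Applying Lemma~\ref{elem2} with $C=4+4\del$ bounds this by $e^{A_k}A_k^{4+4\del}$ plus smaller terms, which is maximized at $k=l$. In the regime $q\le\exp(\sqrt{\log x})$ where $\log(qT_l)\asymp\sqrt{\log x}(\log\log x)^c$, this evaluates to $\sqrt{\log x}(\log\log x)^{5-c+O(\del)}$, supplying the $5-c+\eps$ piece.

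The hardest part will be the complementary regime $q>\exp(\sqrt{\log x})$, where $l=C$ is a fixed constant and the hypotheses of Lemma~\ref{elem2} may fail outright. Here the unconstrained optimum $V_*\approx e^{A_l}A_l^{4+4\del}$ overshoots the valid range $[a(T_l,q),b(T_l,q)]$, so I plan to evaluate the combined exponent directly at the endpoint $V=b(T_l,q)=\log(qT_l)/\log\log(qT_l)$, where the cancellation between $VA_l$ and $V\log V$ forces the atypical contribution to be $o(\sqrt{\log x})$, comfortably absorbed into the base bound. Finally, summing over the $O(\log x)$ dyadic blocks and the $O(\log x)$ integer values of $V$ per block contributes a factor $(\log x)^2$ that is absorbed into $(\log\log x)^\eps$, and taking the maximum of the base and atypical exponents yields the claimed bound $\phi(q)\exp((\log x)^{1/2}(\log\log x)^{\max\{3+\eps,5-c+\eps\}})$.
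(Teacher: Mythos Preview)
Your overall strategy is the same as the paper's: split $\sum_\chi B(x,\chi)$ into dyadic blocks $[T_k,2T_k)$, stratify by the value of $V_n^{\chi}$, count atypical pairs via Proposition~\ref{vtypicalbd}, and optimize the resulting exponent with Lemma~\ref{elem2}. The paper groups by the exact level set $V_n^{\chi}=V$ rather than your cumulative count $V_n^{\chi}\ge V$ with Abel summation, but this is cosmetic. Your separate analysis of the regime $q>\exp(\sqrt{\log x})$, where you observe that the unconstrained maximizer $V_*\approx e^{A_k}A_k^{4+4\del}$ overshoots the admissible window $[a(T_k,q),b(T_k,q)]$ and so the concave exponent must be evaluated at the right endpoint $V=b(T_k,q)$, is correct and in fact more careful than the paper's corresponding step (the paper simply asserts $qT\ge\exp((\log x)^{1/2}(\log\log x)^c)$ ``by our choice of $l$'', which is not literally justified for $\sqrt{\log x}<\log q<\sqrt{\log x}(\log\log x)^c$).

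There is one genuine slip. You assert that the witnesses $t_n^{\chi}\in[n,n+1]$ to $(V-1)$-atypicality are well-separated ``since for fixed $\chi$ they lie in disjoint unit intervals.'' But adjacent intervals $[n,n+1]$ and $[n+1,n+2]$ abut, and the witnesses $t_n^{\chi}$, $t_{n+1}^{\chi}$ can sit within any distance of each other (e.g.\ $t_n^{\chi}=n+0.9$, $t_{n+1}^{\chi}=n+1.1$), so the hypothesis $|t_i^{\chi}-t_j^{\chi}|\ge1$ of Proposition~\ref{vtypicalbd} fails. The paper repairs this by splitting the index set into $n\equiv0\pmod 2$ and $n\equiv1\pmod 2$; within each parity class consecutive indices differ by at least $2$, so the corresponding witnesses are genuinely $1$-separated, and one applies Proposition~\ref{vtypicalbd} to each half at the cost of an absolute factor of $2$.
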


\begin{proof}
Let
\[
B(T,x,\chi)=\sum_{T\le n<2T}\frac1n\exp\left(V_n^{\chi}\log\left(\frac{\log x}{\log(qn)}\right)+2(1+2\del)V_n^{\chi}\log\log V_n^{\chi}\right).
\]
We first rearrange the sum to put together the terms corresponding to the same value of $V_n^{\chi}$. Let $M(V,T,\chi)=\{n\in\nats\mid T\le n<2T,V_n^{\chi}=V\}$. Then
\[
\sum_{\chi}B(T,x,\chi) =\sum_{\chi}\sum_{a(T,q)\le V\le b(T,q)}\sum_{\substack{T\le n<2T\\V_n^{\chi}=V}}\frac1n\exp\left(V\log\left(\frac{\log x}{\log(qn)}\right)+2(1+2\del)V\log\log V\right)
\]
\[
\le\frac1T\sum_{a(T,q)\le V\le b(T,q)}\exp\left(V\log\left(\frac{\log x}{\log(qT)}\right)+2(1+2\del)V\log\log V\right)\sum_{\chi}|M(V,T,\chi)|.
\]
For $V\le2a(T,q)+1$ we note that trivially $|M(V,T,\chi)|\le T$, and therefore
\[
\frac1T\sum_{a(T,q)\le V\le2a(T,q)+1}\exp\left(V\log\left(\frac{\log x}{\log(qT)}\right)+2(1+2\del)V\log\log V\right)\sum_{\chi}|M(V,T,\chi)|
\]
\[
\ll\phi(q)\sum_{a(T,q)\le V\le2a(T,q)+1}\exp\left(C\sqrt{\log q}(\log\log qT)^2(\log\log x)\right)
\]
\[
\ll_{\eps}\phi(q)\exp\left(\sqrt{\log x}(\log\log x)^{3+\eps}\right).
\]
For larger $V$, %let $M_i(V,T,\chi)=\{n\in\nats\mid n\in M(V,T,\chi),n\equiv i\pmod2\}$ and 
note that since $V_n^{\chi}$ is the smallest integer such that all $t\in[n,n+1]$ are $(V_n^{\chi},\del,\chi)$-typical of order $T$, there is some $t_n^{\chi}\in[n,n+1]$ which is $(V_n^{\chi}-1,\del,\chi)$-atypical of order $T$. Then for $i\in\{0,1\}$, the set $N_i(V,T)=\cup_{\chi}\{t_n^{\chi}\mid n\in M(V,T,\chi), n\equiv i\pmod 2\}$ is a set of well-separated ordinates satisfying the hypotheses of Proposition~\ref{vtypicalbd}, so we conclude that
\[
|N_i(V,T)|\ll_{\del}\phi(q)T\exp\left(-V\log\left(\frac{V}{\log\log(qT)}\right)+(2+\del)V\log\log V\right).
\]
But $\sum_{\chi}|M(V,T,\chi)|=|N_0(V,T)|+|N_1(V,T)|$. Thus we also have
\[
\sum_{\chi}|M(V,T,\chi)|\ll_{\del}\phi(q)T\exp\left(-V\log\left(\frac{V}{\log\log(qT)}\right)+(2+\del)V\log\log V\right).
\]
Plugging this in gives
\[
\sum_{\chi}B(T,x,\chi)\ll_{\del,\eps}O(\phi(q)\exp(\sqrt{\log x}(\log\log x)^{3+\eps}))
\]
\[
+\phi(q)\sum_{2a(T)+1\le V\le b(T)}\exp\left(V\log\left(\frac{\log x\log\log(qT)}{\log(qT)}\right)-V\log V+(4+5\del)V\log\log V\right).
\]
We apply Lemma~\ref{elem2} with $A=\log\left(\frac{\log x\log\log(qT)}{\log(qT)}\right)$ and $C=4+5\del$, noting that $A\ge\log\left(\frac{\log\log(qT)}2\right)\ge4C^4+1$ and $V>e^C$ for $qT$ sufficiently large. Then we get
\[
V\log\left(\frac{\log x\log\log(qT)}{\log(qT)}\right)-V\log V+(4+5\del)V\log\log V
\]
\[
\le\log x\frac{\log\log(qT)}{\log(qT)}\left(\log\left(\log x\frac{\log\log(qT)}{\log(qT)}\right)\right)^{4+5\del}
\]
By our choice of $l$ we have $qT\ge\exp((\log x)^{1/2}(\log\log x)^c)$, so this is
\[
\ll\log x\frac{(\log\log x)^{1-c}}{(\log x)^{1/2}}\left(\log\left(\log x\frac{(\log\log x)^{1-c}}{(\log x)^{1/2}}\right)\right)^{4+5\del}\ll(\log x)^{1/2}(\log\log x)^{5-c+5\del}.
\]
We plug back in to $\sum B(T,x,\chi)$ to get
\[
\sum_{\chi}B(T,x,\chi)
\]
\[
\ll_{\del,\eps}O(\phi(q)\exp((\log x)^{1/2}(\log\log x)^{3+\eps}))
+\phi(q)\frac{\log(qT)}{\log\log(qT)}\exp((\log x)^{1/2}(\log\log x)^{5-c+5\del})
\]
and hence
\[
\sum_{\chi}B(x,\chi)=\sum_{\substack{T=T_k\\ l\le k\le K}}\sum_{\chi}B(T,x,\chi)\exp\left(D\frac{\sqrt{\log x}}{\log\log x}\right)
\]
\[
\ll_{\del,\eps}\phi(q)\exp\left((\log x)^{1/2}\left((\log\log x)^{3+\eps}+(\log\log x)^{5-c+5\del}+\frac{D}{\log\log x}\right)\right).
\]
Setting $\del=\eps/5$ gives the bound we want.
\end{proof}

Our main theorem now follows immediately.

\begin{proof}[Proof of Theorem~\ref{main}]
From the preliminaries we have
\[
|M(x;q,a)| \le\frac1{\phi(q)}\sum_{\chi}|A(x/d,\chi)|+O(\log(x/d))
\]
\[
\ll_{\eps}\sqrt{x/d}\exp\left((\log(x/d))^{1/2}(\log\log(x/d))^{c+1+\eps}\right)
\]
\[
+\sqrt{x/d}\exp\left((\log(x/d))^{1/2}\left((\log\log(x/d))^{3+\eps}+(\log\log(x/d))^{5-c+\eps}\right)\right)
\]
\[
\ll_{\eps}\sqrt{x/d}\exp\left((\log(x/d))^{1/2}(\log\log(x/d))^{3+\eps}\right)
\]
after setting $c=2$.
\end{proof}

\end{document}